\newcommand{\C}{{C}}
\long\def\symbolfootnote[#1]#2{\begingroup%
\def\thefootnote{\fnsymbol{footnote}}\footnote[#1]{#2}\endgroup}
\newtheorem{thm}{Theorem}[section]
\newtheorem{prop}[thm]{Proposition}
\newtheorem{lem}[thm]{Lemma}
\newtheorem{cor}[thm]{Corollary}
\theoremstyle{definition}
\newtheorem{example}[thm]{Example}
\theoremstyle{remark}
\newtheorem{rem}[thm]{Remark}
\title{H\"older  estimates for the $\bar\partial$ problem for  $(p,q)$ forms   on product domains}
\author{Yifei Pan and Yuan Zhang\footnote{partially supported by NSF DMS-1501024}}
\date{}
\begin{document}

\maketitle

\begin{abstract}
The purpose of this paper is to study  H\"older estimates for the  $\bar\partial$ problem  for  $(p,q)$ forms  on products of general planar domains. As indicated by an example of Stein and Kerzman, solutions to the $\bar\partial$ problem on product domains in $\mathbb C^n (n\ge 2)$  does not  gain  regularity in H\"older spaces. Making use of an integral representation of  Nijenhuis and Woolf,  we show that  given a $\bar\partial$-closed  $(p,q)$ form with $C^{k,\alpha}$ components, $0\le p\le n, 1\le q\le n$, $k\in \mathbb Z^+\cup \{0\}, 0<\alpha\le 1$, there is a  $C^{k, \alpha'}$ solution  to the  $\bar\partial$ problem on product domains  for any $0<\alpha'<\alpha$ with the desired H\"older estimate.
\end{abstract}

\renewcommand{\thefootnote}{\fnsymbol{footnote}}
\footnotetext{\hspace*{-7mm}
\begin{tabular}{@{}r@{}p{16.5cm}@{}}
Keywords. $\bar{\partial}$-equation, product domains, H\"older spaces.
\end{tabular}}

\section{ Introduction and the main theorems}

The existence and regularity of the Cauchy-Riemann equations have been thoroughly studied in literature along the line of H\"ormander's $L^2$ theory. An alternative approach is to express solutions in integral representations.  Through a series of work including Grauert-Lieb \cite{GL},
Henkin \cite{henkin},  Kerzman \cite{Kerzman}, Henkin-Romanov \cite{HR} and Diederich-Fischer-Forn{\ae}ss \cite{DFF},  supnorm and H\"older estimates of solutions were established   for  smooth bounded domains which are strongly pseudoconvex or convex of finite type. Higher order regularity of solutions on sufficiently smooth bounded strongly pseudoconvex or strongly $\mathbb C$-linearly convex domains were studied by Siu \cite{Siu}, Lieb-Range \cite{LR},  and more recently Gong \cite{Gong} and Gong-Lanzani \cite{GL1} et al.

 Let $\Omega\subset \mathbb C^n, n\ge 2$ be a product of bounded planer domains. Namely, $\Omega = D_1\times\cdots\times D_n$, where each $D_j\subset \mathbb C, j=1, \ldots, n, $ is a  bounded domain in $\mathbb C$ such that  $\partial D_j$ consists of a finite number of  rectifiable Jordan curves which do not intersect one another. Then $\Omega$ is a bounded  pseudoconvex domain (but not convex in general) with at most Lipschitz boundary. A solution operator to $\bar\partial$   was first constructed in a seminal work \cite{NW} of Nijenhuis and Woolf in an {\it iterated} H\"older  space  over polydiscs.   The supnorm estimate for $C^1$ data up to the boundary was proved by Henkin \cite{henkin2}   on the bidisc. Recently, 
 Chen-McNeal \cite{ChM}  studied a type of $L^p$-Sobolev estimates for $(0,1)$ forms on general product domains in $\mathbb C^2$. They  further showed that Henkin's solution operator is not bounded in $L^p, 1\le p<2$.   For  product domains of arbitrary dimensions, Fassina-Pan \cite{FP}  constructed a solution operator for $(0,1)$ forms through one-dimensional method, from which they obtained $L^\infty$ estimates for smooth data.  See also Bertrams \cite{B}, Ehsani \cite{Ehsani},
 Chakrabarti-Shaw \cite{CS}, Dong-Li-Treuer \cite{DLT} %, Jin-Yuan \cite{JY}
 and the references therein for investigation of the canonical solutions on product domains.

  We should point out that unlike strictly pseudoconvex smooth domains, the $\bar\partial$ problem on product domains does not gain regularity. Indeed, motivated by an example of Stein and Kerzman \cite{Kerzman}, one can construct examples  to show that the $\bar\partial$ problem on product domains in general has  no gain of regularity in the (standard) H\"older spaces. The  examples  are  verified at the end of  Section 5.  Therefore, a natural question is, given a H\"older data on product domains, whether there exists a  solution to the $\bar\partial$ equation in the same H\"older class.  It is our goal to generalize the result of \cite{NW} and study the classical H\"older estimate of a $\bar\partial$ solution operator for $(p,q)$ forms  on general product domains. 

Let $C^{k, \alpha}(\Omega)$ be the (standard) H\"older space, $ k\in \mathbb Z^+\cup \{0\}$, $ 0<\alpha\le 1, $ and $(p, q)$ form   is said to be in $ C_{(p,q)}^{k, \alpha}(\Omega)$, $0\le p\le n$, $1\le q\le n$, if all its components are in $ C^{k, \alpha}(\Omega)$. (See Section 2 for the definition.) Given a function $f\in C^{k, \alpha}(\Omega)$, define for $z\in \Omega$, the solid and boundary Cauchy type integrals below, respectively. 
\begin{equation}\label{TS}
  \begin{split}
    T_j f(z):&=-\frac{1}{2\pi i}\int_{D_j}\frac{f(z_1, \ldots, z_{j-1}, \zeta_j, z_{j+1}, \ldots, z_n)}{\zeta_j-z_j}d\bar\zeta_j\wedge d\zeta_j;\\
    S_j f(z):&=\frac{1}{2\pi i}\int_{\partial D_j}\frac{f(z_1, \ldots, z_{j-1}, \zeta_j, z_{j+1}, \ldots, z_n)}{\zeta_j-z_j}d\zeta_j.
  \end{split}
\end{equation}

The  boundedness of these operators was established by Nijenhuis and Woolf in \cite{NW}  on polydiscs with respect to an iterated H\"older norm, which is stronger than the (standard) H\"older norm. See Section 3 for a revisit of the related work in \cite{NW}. Thus the resulting iterated H\"older spaces are  subspaces of the corresponding (standard) H\"older spaces.    Since their approach  relies also largely on rich symmetry of polydiscs, the  method no longer works either for the standard H\"older spaces or over general product domains. In this paper, we  prove the H\"older regularity for $T_j$ and $S_j$ in the (standard) H\"older spaces on general product domains.  Indeed, as demonstrated by examples in Section 4 in contrast to 
their one dimensional counterparts on planar domains, the following H\"older estimates for $T_j$ and $S_j$   turn out to be optimal.   
\medskip

\begin{thm}\label{Holder}
  a). $T_j$ is a bounded linear operator sending $C^{k, \alpha}(\Omega)$ into $C^{k, \alpha}(\Omega)$, $k\in \mathbb Z^+\cup \{0\}$,  $0<\alpha<1$. Namely, there exists some constant $C$ dependent only on $\Omega, k$ and $\alpha$, such that for any $f\in C^{k, \alpha}(\Omega)$,
      \begin{equation}\label{Tb}
    \begin{split}
       \|T_j f\|_{C^{k, \alpha}(\Omega)}\le C\|f\|_{C^{k, \alpha}(\Omega)}.
          \end{split}
  \end{equation}
 % Moreover, for any $f\in C^{k, \alpha}(\Omega)$, $\bar\partial_{z_j} T_j f =f$ in $\Omega$, $1\le j\le n$.\\
b).  $S_j$ is a bounded linear operator sending $C^{k, \alpha}(\Omega)$ into $C^{k, \alpha'}(\Omega)$, $k\in \mathbb Z^+\cup \{0\}$, $0<\alpha'<\alpha\le 1$. Namely, there exists some $C$ dependent only on $\Omega, k, \alpha$ and $ \alpha'$, such that for any $f\in C^{k, \alpha}(\Omega)$,
      \begin{equation}\label{Sb}
    \|S_j f\|_{C^{k, \alpha'}(\Omega)}\le C\|f\|_{C^{k, \alpha}(\Omega)}.
     \end{equation}
    % Moreover, for any $f\in C^{k, \alpha}(\Omega)$ with $ k\in\mathbb Z^+$, $\bar\partial_{z_j} S_j f =0$ in $\Omega$, $1\le j\le n$.  
\end{thm}

\medskip

%Note that  the solution operator in \cite{NW} automatically extends to general planar  domains. 
As an application of the  boundedness of these operators in H\"older spaces, an estimate of a $\bar\partial$ solution in H\"older spaces is obtained with a loss of regularity that can be made arbitrarily small as follows.
 
\begin{thm}\label{main}
Let $D_j\subset\mathbb C$, $j= 1, \ldots, n,$ be bounded domains with  $C^{k+1,\alpha}$ boundary,   $n\ge 2, k\in \mathbb Z^+\cup \{0\}, 0<\alpha\le 1$, and let $\Omega: = D_1\times\cdots\times D_n$. Assume that $\mathbf f\in C_{(p, q)}^{k, \alpha}(\Omega)$ is a $\bar\partial$-closed $(p, q)$ form on $\Omega$,  $0\le p\le n, 1\le q\le n$. There exists a solution  $\mathbf u\in C_{(p, q-1)}^ {k, \alpha'}(\Omega)$ to  $\bar\partial \mathbf u =\mathbf f$ 
 such that for any $0<\alpha'<\alpha$,  $\|\mathbf u\|_{C^{k, \alpha'}(\Omega)}\le C\|\mathbf f\|_{C^{k, \alpha}(\Omega)}$, where $C$ depends only on $\Omega, k, \alpha$ and $\alpha'$. Here when $k=0$, all equations are understood in the sense of distributions.
\end{thm}
\medskip

It is desirable to know whether  there exists a solution operator that can achieve the same regularity as that of the data in H\"older spaces. However, we do not have answers at this point. We also mention that   another type of an iterated H\"older space was studied in  \cite{ChM2} where estimates of the solutions depend on higher order derivatives of the data. See Remark \ref{mm} d) for a brief comparison of these spaces and the  corresponding estimates.

For smooth data up to the boundary of the product domains,  the existence of smooth solutions for $(p,1)$ forms  has  already been obtained in \cite{CS} with  Sobolev estimates. As a direct consequence of Theorem \ref{main}, we obtain the following corollary   for  $(p, q)$ forms smooth up to the boundary in terms of H\"older estimates.

\begin{cor}\label{mains}
  Let $D_j\subset\mathbb C$, $j= 1, \ldots, n$,  be bounded domains with  $C^{\infty}$ boundary, $ n\ge 2$, and $\Omega: = D_1\times\cdots\times D_n$. Assume $\mathbf f\in C_{(p, q)}^{\infty}(\overline\Omega)$ is a $\bar\partial$-closed $(p,q)$ form on $\Omega$, $0\le p\le n, 1\le q\le n$. There exists a solution $\mathbf u\in C_{(p, q-1)}^{\infty}(\overline\Omega)$ to $\bar\partial \mathbf u =\mathbf f$ in $\Omega$. Moreover,   for all $k\in \mathbb Z^+\cup \{0\}$, $0<\alpha'<\alpha \le 1$, $\|\mathbf u\|_{C^{k, \alpha'}(\Omega)}\le C_{k, \alpha, \alpha'}\|\mathbf f\|_{C^{k, \alpha}(\Omega)}$, where  $C_{k, \alpha, \alpha'}$ depends only on $\Omega, k, \alpha$ and $\alpha'$.
\end{cor}

The rest of the paper is organized as follows.  Section 2 addresses preliminaries about solid and boundary Cauchy  integrals on the complex plane. Section 3 is a revisit of the fundamental work of Nijenhuis and Woolf \cite{NW} on the $\bar\partial$ problem.  Theorem \ref{Holder} is proved in Section 4, along with examples demonstrating those estimates are optimal in H\"older category. The last section is  devoted to the proof of   Theorem \ref{main} and Corollary \ref{mains}. % 4 discusses the solution operator of the $\bar\partial$ equation on product domains and the connection to the solution introduced in \cite{FP}. The last section is devoted to the proof of Theorem \ref{main}. 
  In the Appendix, a  convergence result of the mollifier method in H\"older spaces is proved.
%{\bf{Notation:}} Throughout the rest of the paper, unless otherwise indicated, %we denote $\Gamma(z)=\frac{1}{z}$ and $\partial =\frac{\partial}{\partial z},~\partial_\zeta =\frac{\partial}{\partial \zeta}$, $\bar\partial =\frac{\partial}{\partial \bar z},~\bar\partial_\zeta =\frac{\partial}{\partial \bar\zeta}$.$0<\alpha<1$, $k\in\mathbb{Z}^+\cup\{0\}$.
\medskip

\textbf{Acknowledgement: } Both authors  thank Liding Yao for providing an example in the Appendix.  Part of the work was done while the second author was visiting American Institute of Mathematics (AIM). She also appreciates  AIM and Association for Women in Mathematics (AWM) for hospitality during her visit.

\section{Notations and Preliminaries}
%Given $f\in C^\alpha(\Omega)$, define a semi-norm
%$$H_\alpha[f]: =\sup\left\{\frac{|f(z)-f(z')|}{|z-z'|^\alpha}: z,z'\in \Omega\right\}.$$
 As a common notice,  % letters $k, \alpha, \alpha'$ throughout the paper are  always referred to (part of) the indices of H\"older spaces. Depending on the context, $\gamma$  is either a positive integer or an $n$-tuple. 
 we use $u$ and $f$ to represent complex-valued functions, and boldface $\mathbf u$ and $\mathbf f$ to represent forms. Unless otherwise specified,  $C$ represents a constant dependent only on $\Omega, k,  \alpha$ and $\alpha'$, which may be of different values in different places.

Let $\Omega\subset\mathbb C^n$ be a bounded domain, the standard H\"older space    $C^{k, \alpha} (\Omega), k\in\mathbb Z^+\cup\{0\}, 0<\alpha\le 1$ is defined by
 \begin{equation*}
        \{f\in C^k(\Omega): \|f\|_{C^{k, \alpha}(\Omega)} := \|f\|_{C^k(\Omega)}+ \sum_{|\gamma|=k}H^\alpha[D^\gamma f] <\infty\}.
    \end{equation*}
    Here $D^\gamma$ represents any $|\gamma|$-th derivative operator, $$\|f\|_{C^k(\Omega)}: = \sum_{|\gamma|=0}^k\sup_{z\in \Omega}|D^\gamma f(z)| $$ and the H\"older semi-norm is
$$H^\alpha[f]: =\sup_{z, z'\in \Omega, z\ne z'} \frac{|f(z) - f(z')|}{|z-z'|^\alpha}.$$
When $k=0, 0<\alpha<1$, we write $ C^{0, \alpha} (\Omega) =  C^{\alpha} (\Omega)$. For a $(p,q)$ form $\mathbf f\in C^{k, \alpha}_{(p,q)}(\Omega)$, define $\|\mathbf f\|_{C^{k, \alpha}(\Omega)}$ to be the sum of the ${C^{k, \alpha}(\Omega)}$ norms of all its components.

 When $\Omega = D_1\times \cdots\times D_n$ is a product of planar domains,  for each $j\in \{1, \ldots, n\}$,  the H\"older semi-norm with respect to $j$-th variable for each fixed $(z_1,\ldots, z_{j-1},z_{j+1}, \ldots, z_n)\in D_1\times\cdots\times D_{j-1}\times  D_{j+1}\times\cdots\times D_n$  is defined by
 \begin{equation*}\begin{split}
   &H^\alpha_j[f(z_1,\ldots, z_{j-1},\cdot,z_{j+1}, \ldots, z_n)]:\\
    =&\sup_{\zeta, \zeta' \in D_j, \zeta\ne \zeta'} \frac{|f(z_1,\ldots, z_{j-1},\zeta,z_{j+1}, \ldots, z_n) - f(z_1,\ldots, z_{j-1},\zeta',z_{j+1}, \ldots, z_n)|}{|\zeta-\zeta'|^\alpha}.\end{split}
 \end{equation*}
 Clearly, $$\sup_{\substack{z_k\in D_k,\\ 1\le k(\ne j)\le n}}H^\alpha_j[f(z_1,\ldots, z_{j-1},\cdot,z_{j+1}, \ldots, z_n)]\le H^\alpha[f],\ \ \  j = 1. \ldots, n.$$ On the other hand,  the following  elementary lemma for H\"older functions is observed for product domains.

\medskip

\begin{lem}\label{comp} Let $\Omega = D_1\times \cdots\times D_n$ be a product of planar domains. Then
 \begin{equation}\label{ele}
     H^\alpha[f]\le \sum_{1\le j\le n}\sup_{\substack{z_k\in D_k,\\ 1\le k(\ne j)\le n}}H^\alpha_j[  f(z_1,\ldots, z_{j-1},\cdot,z_{j+1}, \ldots, z_n)].
 \end{equation}
   \end{lem}

\begin{proof}
    For simplicity of  exposition, assume  $n=2$ with $\Omega=D_1\times D_2$. Let $C$ be the right hand side of (\ref{ele}). For any $z=(z_1, z_2)\in D_1\times D_2, z'=(z'_1, z'_2)\in D_1\times D_2$, then $(z'_1, z_2)\in D_1\times D_2$. Hence
  $|f(z_1, z_2)- f(z'_1, z'_2)|\le |f(z_1, z_2)- f(z'_1, z_2)|+| f(z'_1, z_2)- f(z'_1, z'_2)|\le C |z -z'|^\alpha$.
  
\end{proof}

The rest of the section is devoted to  classical theory in complex analysis. Let $D$ be a bounded domain in  $\mathbb C$ with $C^{k+1,\alpha}$  boundary, $k\in \mathbb Z^+\cup \{0\}, 0<\alpha\le 1$. Given a complex-valued continuous function $f\in C(\bar D)$, we define the following two operators related to the Cauchy kernel for $z\in D$:

\begin{equation*}
\begin{split}
 Tf(z): &=\frac{-1}{2\pi i}\int_D \frac{f(\zeta)}{\zeta- z}d\bar{\zeta}\wedge d\zeta;\\
Sf(z): &=\frac{1}{2\pi i}\int_{\partial D}\frac{f(\zeta)}{\zeta- z}d\zeta.
\end{split}
\end{equation*}
Here  the positive orientation of $\partial D$ is adopted for the contour integral  such  that  $D$ is always to the left while traversing along the contour(s). As is well known, $T$ is  the universal solution operator for the $\bar\partial$ operator on $D$, while   $S$ turns integrable functions on $\partial D$ to holomorphic functions in $D$.
In the following, we state  some  properties of the two operators that will be used in later sections.

%The following theorem is the Cauchy-Green formula or Pompeiu's formula.
\begin{thm}(cf. \cite{V})\label{CG}
  Let D be a bounded domain with $C^{1, \alpha}$ boundary, $f\in C(\bar D)$ and $f_{\bar{z}} =\frac{\partial f}{\partial \bar z}\in L^p(D), p>2$. Then
  \begin{equation*}
    f =S f+ T(f_{\bar\zeta}) \ \ \text{in}\ \ D.
  \end{equation*}
\end{thm}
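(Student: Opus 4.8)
The statement is the classical Cauchy--Pompeiu (Cauchy--Green) representation formula, expressing $f$ as the sum of its boundary Cauchy integral and the solid Cauchy transform of its $\bar\partial$-derivative. The plan is to establish it first for smooth functions via Stokes' theorem, and then remove all smoothness assumptions by an $L^p$-approximation argument that exploits the mapping properties of $T$ rather than approximating $f$ directly.

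\emph{Smooth case.} First I would prove the identity for $f\in C^1(\bar D)$. Fix $z\in D$ and apply Stokes' theorem to the $1$-form $\frac{f(\zeta)}{\zeta-z}\,d\zeta$ on the punctured domain $D\setminus\overline{B_\epsilon(z)}$. Since $(\zeta-z)^{-1}$ is holomorphic in $\zeta$ away from $z$, one has
\[
d\!\left(\frac{f(\zeta)}{\zeta-z}\,d\zeta\right)=\frac{f_{\bar\zeta}(\zeta)}{\zeta-z}\,d\bar\zeta\wedge d\zeta .
\]
Letting $\epsilon\to 0$, the inner circle $\partial B_\epsilon(z)$ contributes the residue term $2\pi i\,f(z)$, while the area integral converges because the singularity $|\zeta-z|^{-1}$ is integrable in the plane. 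Rearranging gives exactly $f=Sf+T(f_{\bar\zeta})$ for $f\in C^1(\bar D)$.

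\emph{Reduction to an $L^p$ density statement.} Next I would invoke the standard properties of the solid operator (as in Vekua): for $p>2$, $T$ maps $L^p(D)$ boundedly into $C^{1-2/p}(\bar D)$, and $\partial_{\bar z}(Tg)=g$ in the sense of distributions. The key auxiliary fact is that $S(Tg)=0$ in $D$ for every $g\in L^p(D)$. For $g\in C_c^\infty(D)$ the function $Tg$ is smooth, hence lies in $C^1(\bar D)$, so the smooth case applied to $Tg$ yields $Tg=S(Tg)+T((Tg)_{\bar\zeta})=S(Tg)+Tg$, forcing $S(Tg)=0$. For general $g\in L^p(D)$ I would choose $g_n\in C_c^\infty(D)$ with $g_n\to g$ in $L^p$; then $Tg_n\to Tg$ uniformly on $\bar D$ by boundedness of $T$, and consequently $S(Tg_n)(z)\to S(Tg)(z)$ for each fixed $z\in D$, since $\partial D$ is rectifiable and $\zeta\mapsto(\zeta-z)^{-1}$ is bounded on it. As each $S(Tg_n)=0$, this gives $S(Tg)=0$.

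\emph{Conclusion and main obstacle.} Finally, set $g:=f_{\bar\zeta}\in L^p(D)$ and $h:=f-Tg$. Then $h\in C(\bar D)$ and $\partial_{\bar z}h=f_{\bar z}-g=0$ distributionally, so by Weyl's lemma $h$ is holomorphic in $D$; being continuous up to the $C^{1,\alpha}$ boundary, the Cauchy integral formula gives $h=Sh$. Combining this with $S(Tg)=0$ yields $Sf=Sh+S(Tg)=h=f-T(f_{\bar\zeta})$, which is the asserted identity. The step I expect to be the main obstacle is precisely the passage from smooth $f$ to the rough class $f\in C(\bar D)$ with $f_{\bar z}\in L^p$: a naive mollification would disturb the boundary values. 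The device $S\circ T=0$ is what circumvents this, reducing the whole issue to $L^p$-density of $C_c^\infty(D)$ in the interior; the only remaining care is in the cited mapping properties of $T$ and in justifying $h=Sh$ on a merely $C^{1,\alpha}$ boundary.
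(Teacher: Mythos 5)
Your proof is correct. Note, however, that the paper does not actually prove Theorem \ref{CG}: its ``proof'' is the single line ``See \cite{V} formula 6.10 (p.~41)'', so there is no internal argument to compare against. Your derivation --- Cauchy--Pompeiu for $f\in C^1(\bar D)$ via Stokes' theorem on the punctured domain, the identity $S\circ T=0$ on $L^p(D)$ obtained by applying the smooth case to $Tg$ for $g\in C_c^\infty(D)$ and then passing to the limit using the bound $\|Tg\|_{C^{1-2/p}(\bar D)}\le C\|g\|_{L^p(D)}$, and finally Weyl's lemma together with the Cauchy integral formula for the holomorphic remainder $h=f-T(f_{\bar\zeta})$ --- is a legitimate, self-contained route to the statement. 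It is close in spirit to Vekua's own treatment (smooth case plus density within the class of functions with generalized $L^p$ derivatives), but your device $S\circ T=0$ is a tidy way to avoid mollifying $f$ near the boundary, which, as you observe, would otherwise be the delicate point. The two remaining items you flag are indeed the only ones needing care: the validity of $h=Sh$ for $h$ holomorphic in $D$ and merely continuous on $\bar D$ (standard for $C^{1,\alpha}$ boundaries, e.g.\ by exhausting $D$ by slightly shrunken subdomains and using uniform continuity of $h$ on $\bar D$), and the fact that $\partial D$ may consist of several disjoint Jordan curves, which changes nothing so long as the paper's positive orientation convention is used for every component.
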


%The following theorem concerns H\"older estimates of $T$ and $S$.

\begin{thm}(cf. \cite{V})%\cite{Muskhelishvili}
\label{holderC}
  Let D be a bounded domain with $C^{k+1, \alpha}$ boundary, and $f\in C^{k, \alpha}(D), k\in \mathbb Z^+\cup \{0\}, 0<\alpha<1$. Then $Tf\in C^{k+1, \alpha}(D)$ and $Sf\in C^{k, \alpha}(D)$. Moreover,  there exists a constant $C$ dependent only on $D, k$ and $\alpha$, such that
  \begin{equation*}
    \begin{split}
      &\|Tf\|_{C^{k+1, \alpha}(D)}\le C\|f\|_{C^{k, \alpha}(D)};\\
      &\|Sf\|_{C^{k, \alpha}(D)}\le C\|f\|_{C^{k, \alpha}(D)}.
    \end{split}
  \end{equation*}
\end{thm}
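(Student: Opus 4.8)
The plan is to treat the two operators separately and, in each case, to bootstrap from a base estimate at $k=0$ via induction on the order of differentiation. Throughout I would use that $d\bar\zeta\wedge d\zeta = 2i\,dA(\zeta)$, so that $Tf(z) = -\frac{1}{\pi}\int_D \frac{f(\zeta)}{\zeta-z}\,dA(\zeta)$ is the solid Cauchy transform with kernel $\frac{1}{\pi z}$, the fundamental solution of $\partial_{\bar z}$. The two first-order derivatives of $Tf$ behave differently: one has $\partial_{\bar z}Tf = f$ (the defining property of the solid Cauchy transform), while $\partial_z Tf$ is a principal-value Beurling transform plus a boundary contribution. The whole problem thus reduces to controlling two singular integral operators — the Beurling transform attached to $T$ and the principal-value boundary Cauchy integral attached to $S$ — in H\"older norms.

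First I would establish the base case for $T$, namely $Tf\in C^{1,\alpha}(\bar D)$ for $f\in C^\alpha(\bar D)$. Since $\partial_{\bar z}Tf=f\in C^\alpha$ is immediate, the heart of the matter is to show that $\partial_z Tf = \mathrm{p.v.}\,(-\frac{1}{\pi})\int_D \frac{f(\zeta)}{(\zeta-z)^2}\,dA(\zeta)$ (modulo a boundary term) sends $C^\alpha(\bar D)$ to itself with norm bound $C\|f\|_{C^\alpha}$. I would do this by the classical regularization trick: subtract $f(z)$ inside the integral to obtain the absolutely convergent $-\frac{1}{\pi}\int_D \frac{f(\zeta)-f(z)}{(\zeta-z)^2}\,dA$, using $|f(\zeta)-f(z)|\le H^\alpha[f]\,|\zeta-z|^\alpha$, and convert the leftover term $f(z)\,\partial_z\int_D(\zeta-z)^{-1}\,dA$ into a boundary integral by Stokes, which is smooth in $z$ since $\partial D$ is $C^{1,\alpha}$. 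Estimating the H\"older difference of the regularized integral by splitting the domain into a disc of radius $\sim|z-z'|$ about the two evaluation points and its complement yields $\partial_z Tf\in C^\alpha$, hence $Tf\in C^{1,\alpha}(\bar D)$ with the stated estimate.

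The induction in $k$ for $T$ proceeds by transferring holomorphic derivatives onto the data. Because $\partial_\zeta(\zeta-z)^{-1} = -\partial_z(\zeta-z)^{-1}$, integration by parts yields a formula of the schematic form $\partial_z Tf = T(\partial_\zeta f) + (\text{boundary Cauchy integral of }f)$, while every $\partial_{\bar z}$-derivative is removed using $\partial_{\bar z}Tf=f$. Iterating, each derivative of order $k+1$ of $Tf$ is expressed through the base-case estimate applied to $\partial^{k}f\in C^{\alpha}$, together with boundary integrals of derivatives of $f$ of order $\le k$; the latter are handled by the estimate for $S$. For $S$ itself, $Sf$ is holomorphic in $D$, so only boundary behavior is at issue, and $f$ enters through its boundary values in $C^{k,\alpha}(\partial D)$. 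The base case is the Plemelj--Privalov theorem, proved by splitting the contour into the arc within distance $\sim|z-z'|$ of the evaluation points and its complement and using the $C^{1,\alpha}$ arc-length parametrization to bound the kernel differences; for higher $k$ one differentiates under the integral, picking up kernels $(\zeta-z)^{-m}$, and integrates by parts along $\partial D$ to move derivatives onto $f$ and onto the parametrization. The hypothesis that $\partial D$ be $C^{k+1,\alpha}$ is exactly what makes these repeated integrations by parts legitimate and keeps the geometric coefficients (tangent vector and its derivatives) in $C^{k,\alpha}$.

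I expect the main obstacle to be the sharp H\"older continuity of the singular operators uniformly up to the boundary — the Beurling transform for $T$ and the principal-value Cauchy integral for $S$ — where the delicate near/far splitting and the interplay with the boundary geometry occur. Once these base estimates are secured, the induction is essentially bookkeeping: one must carefully track the boundary terms produced by each integration by parts and verify that they remain in the correct H\"older class, which is precisely what the $C^{k+1,\alpha}$ regularity of $\partial D$ guarantees.
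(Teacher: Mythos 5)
The paper does not actually prove this statement: it is quoted from Vekua's book, with the proofs deferred to Theorem 1.32 (p.~56) of \cite{V} for $T$ and Theorem 1.10 (p.~21) for $S$, and your outline is a correct reconstruction of exactly those classical arguments --- the identity $\partial_{\bar z}Tf=f$ together with the regularized Beurling-transform estimate for the base case of $T$, integration by parts transferring $\partial_z$ onto the data (producing boundary Cauchy integrals controlled by the $S$-estimate) for higher $k$, and Plemelj--Privalov plus tangential integration by parts along the $C^{k+1,\alpha}$ boundary for $S$. The one step worth making explicit is the far-field term in your near/far splitting for $\partial_z Tf$: a naive absolute-value bound of $\int_{|\zeta-z|>2\delta}|\zeta-z'|^{-2}\,dA$ yields a factor $\log\frac{1}{\delta}$, and one must use the mean-zero property of $(\zeta-z)^{-2}$ on circles (equivalently, convert that far integral to a bounded boundary integral) to obtain the clean $\delta^{\alpha}$ bound; this cancellation, not just the size estimate $|f(\zeta)-f(z)|\le H^{\alpha}[f]|\zeta-z|^{\alpha}$, is what makes the base case work for $0<\alpha<1$.
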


%The following theorem states that $T$ is the solution operator to $\bar\partial$ on $L^p$ space.

\begin{thm}(cf. \cite{V})\label{lpC}
   Let D be a bounded domain. Then $Tf\in C^{\alpha}(D)$ if $f\in L^p(D), p>2, \alpha=\frac{p-2}{p}$, and there exists a constant $C$ dependent only on $D$ and $p$, such that $$\|Tf\|_{C^{\alpha}(D)}\le C\|f\|_{L^p}.$$
  Moreover, $\bar\partial T = id$ on $L^p(D), 1\le p<\infty$ in the sense of distributions.
\end{thm}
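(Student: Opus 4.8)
The plan is to reduce $T$ to a convolution and then treat the two assertions separately. Writing $d\bar\zeta\wedge d\zeta=2i\,dA(\zeta)$ with $dA$ the planar Lebesgue measure, one has
\[
Tf(z)=-\frac1\pi\int_D\frac{f(\zeta)}{\zeta-z}\,dA(\zeta)=(K*f)(z),\qquad K(w):=\frac1{\pi w},
\]
after extending $f$ by zero outside $D$; note $K\in L^1_{\mathrm{loc}}(\mathbb C)$. For the sup-norm bound I would apply H\"older's inequality with the conjugate exponent $q=\frac{p}{p-1}$ to get
\[
|Tf(z)|\le\frac1\pi\|f\|_{L^p}\Big(\int_D\frac{dA(\zeta)}{|\zeta-z|^{q}}\Big)^{1/q}.
\]
Since $p>2$ forces $q<2$, the kernel $|\zeta-z|^{-q}$ is integrable on the bounded set $D$, and enlarging $D$ to a fixed disc shows the last integral is bounded uniformly in $z$ by a constant depending only on $D$ and $p$. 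This yields $\|Tf\|_{C^0(D)}\le C\|f\|_{L^p}$.

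For the H\"older seminorm, fix $z_1,z_2\in D$, set $\delta=|z_1-z_2|$, and use the elementary identity $\frac{1}{\zeta-z_1}-\frac{1}{\zeta-z_2}=\frac{z_1-z_2}{(\zeta-z_1)(\zeta-z_2)}$ together with H\"older's inequality to obtain
\[
|Tf(z_1)-Tf(z_2)|\le\frac{\delta}{\pi}\,\|f\|_{L^p}\,I^{1/q},\qquad I:=\int_D\frac{dA(\zeta)}{|\zeta-z_1|^{q}|\zeta-z_2|^{q}}.
\]
I would bound $I$ by the change of variables $\zeta=z_1+\delta w$, which gives $I=\delta^{2-2q}\int_{(D-z_1)/\delta}|w|^{-q}|w-e^{i\theta}|^{-q}\,dA(w)$ with $e^{i\theta}=(z_2-z_1)/\delta$. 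Because $q<2$ the integrand is locally integrable at its two singularities, and because $2q>2$ it is integrable at infinity; hence the rescaled integral is dominated by $\int_{\mathbb C}|w|^{-q}|w-e^{i\theta}|^{-q}\,dA(w)$, a finite constant depending only on $q$ (by rotation invariance, independent of $\theta$). Thus $I\le C\delta^{2-2q}$ and $I^{1/q}\le C\delta^{2/q-2}$. Since $\frac2q-2=-\frac2p=\alpha-1$, the prefactor $\delta$ produces exactly $|Tf(z_1)-Tf(z_2)|\le C\|f\|_{L^p}\delta^{\alpha}$, completing $\|Tf\|_{C^\alpha(D)}\le C\|f\|_{L^p}$.

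For the distributional identity I would invoke the fundamental-solution property $\bar\partial K=\delta_0$, that is $\int_{\mathbb C}\frac{1}{\pi(z-\zeta)}\,\partial_{\bar z}\varphi(z)\,dA(z)=-\varphi(\zeta)$ for every $\varphi\in C_c^\infty(D)$, which is the Cauchy--Pompeiu formula applied to $\varphi$. For $f\in L^p(D)$ with $1\le p<\infty$ one has $f\in L^1(D)$ since $D$ is bounded, so $Tf=K*f\in L^1_{\mathrm{loc}}$, and Fubini (legitimate because $K\in L^1_{\mathrm{loc}}$, $f\in L^1$, and $\varphi$ is smooth with compact support) gives
\[
\langle\bar\partial Tf,\varphi\rangle=-\int_D Tf\,\partial_{\bar z}\varphi\,dA=-\int_D f(\zeta)\Big(\int_{\mathbb C}\tfrac{1}{\pi(z-\zeta)}\,\partial_{\bar z}\varphi(z)\,dA(z)\Big)dA(\zeta)=\int_D f\varphi\,dA,
\]
which is precisely $\bar\partial Tf=f$ in the sense of distributions. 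The routine parts are the two applications of H\"older's inequality and the Fubini bookkeeping; the only step requiring genuine care is the estimate of the doubly singular integral $I$, where the scaling argument must simultaneously control the two interior singularities (handled by $q<2$) and the decay at infinity (handled by $2q>2$), and where the arithmetic $\frac2q-2=\alpha-1$ is exactly what pins down the sharp exponent $\alpha=\frac{p-2}{p}$.
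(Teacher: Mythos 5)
Your proof is correct: the sup-norm bound via H\"older's inequality, the H\"older seminorm via the splitting $\frac{1}{\zeta-z_1}-\frac{1}{\zeta-z_2}=\frac{z_1-z_2}{(\zeta-z_1)(\zeta-z_2)}$ combined with the rescaling of the doubly singular integral (which converges precisely because $1<q=\frac{p}{p-1}<2$), the exponent arithmetic $\frac{2}{q}-1=\frac{p-2}{p}=\alpha$, and the distributional identity via the Cauchy--Pompeiu formula and Fubini are all sound. The paper gives no proof of its own for this statement --- it only cites Theorems 1.19 and 1.14 of Vekua's book \cite{V} --- and your argument is essentially the classical one found there, so it fills in exactly what the citation leaves implicit.
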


For proofs of the above theorems, see    p. 41 \cite{V} for Theorem \ref{CG}; p. 56 \cite{V} and    p. 21 \cite{V} for Theorem \ref{holderC};
 p. 38 \cite{V} and p. 29 \cite{V}  for Theorem \ref{lpC}.

\section{Revisit of Nijenhuis-Woolf's work on polydiscs}

In this section, we  present the related results in the fundamental work of Nijenhuis and Woolf \cite{NW} for the $\bar\partial$ problem on the polydisc $\mathbb D^n: =\{(z_1,\cdots,  z_n)\in \mathbb C^n: |z_j|<1, j=1, \ldots, n\}$. We shall purposely retain their notation as much as possible for the convenience of readers.

Let $f$ be a complex-valued function on $\mathbb D^n$. Define $\triangle_i f$ to be a function on the subset $\mathbb D_i$ of  $\mathbb D^{n+1}$ whose points $Z_{i} = (z_1, \cdots, z_{i-1}, (z_i, z_i'), z_{i+1}, \cdots, z_n)$ satisfies $z_i\ne z_i'$,   such that 
$$\triangle_i f(Z_{i}) = f(z_1, \cdots,  z_i,  \cdots, z_n)- f(z_1, \cdots, z_{i-1},  z_i', z_{i+1}, \cdots, z_n).$$
Recursively, let $\mathbb D_{i_1\cdots i_k}$ be the subset of $\mathbb D^{n+k}$ whose points $Z_{i_1\cdots i_k} =(z_1, \cdots,(z_{i_1}, z_{i_1}'), \cdots, (z_{i_k}, z_{i_k}'),\\ \cdots, z_n)$ satisfy $ z_{i_j}\ne z_{i_j}', j=1, \cdots, k$. Define on $\mathbb D_{i_1\cdots i_k}$ a function   $$\triangle_{i_1\cdots i_k}f: = \triangle_{i_k}\triangle_{i_1\cdots i_{k-1}} f.$$

In \cite{NW},  a naturally defined iterated H\"older space $\mathcal C^\alpha(\mathbb D^n)$ (with the notation slightly different from that of the standard H\"older space) was introduced such that a function $f\in \mathcal C^\alpha (\mathbb D^n)$ if 
\begin{equation}\label{hh1}
    \|f\|_{ \mathcal C^\alpha (\mathbb D^n)}: = \|f\|_{C(\mathbb D^n)} +\sum_{k=1}^n H_\alpha^{(k)}[f]<\infty.
\end{equation}
Here
$$H_\alpha^{(k)}[f]: =\sup_{ \substack
{1\le i_1<\ldots <i_k\le n, \\Z_{i_1\cdots i_k}\in \mathbb D_{i_1\cdots i_k}}} \left\{\frac{|\triangle_{i_1\cdots i_k}f(Z_{i_1\cdots i_k}) |}{|z_{i_1}-z_{i_1}'|^\alpha\cdots|z_{i_k}-z_{i_k}'|^\alpha}\right\}. $$
Since $H_\alpha^{(1)}$ is precisely $H^\alpha$ in Section 2, we have $ \|\cdot\|_{C^\alpha (\mathbb D^n)}\le \|\cdot\|_{ \mathcal C^\alpha (\mathbb D^n)} $.  In fact, one further has (p. 485 \cite{NW})
\begin{equation}\label{incl}
    \mathcal C^\alpha(\mathbb D^n)\subset C^\alpha(\mathbb D^n)\subset \mathcal C^{\frac{\alpha}{n}}(\mathbb D^n). 
\end{equation}

\medskip

Let $T_j$ and $S_j$ be the solid and boundary Cauchy integral operators acting on functions over $j$-th slice of $\mathbb D^n$  as in (\ref{TS}).
 Given a $(p, q)$ form \begin{equation}\label{pq}
     \mathbf f = \sum_{\substack{i_1<\cdots<i_p,\\ j_1<\cdots<j_q}}f_{i_1\cdots i_p\bar j_1\cdots\bar j_q}dz_{i_1}\wedge \cdots \wedge dz_{i_p}\wedge d\bar z_{j_1}\wedge\cdots \wedge d\bar z_{j_q}\in C^1_{(p,q)}(\bar{\mathbb D}^n),
 \end{equation}
define  $T_j \mathbf f$ and $S_j\mathbf f$ to be  the action on the corresponding  component functions. Namely,
\begin{equation*}
    \begin{split}
       & T_j\mathbf f: = \sum_{ \substack{1\le i_1<\cdots<i_p\le n,\\1\le j_1<\cdots<j_q\le n}}T_jf_{i_1\cdots i_p\bar j_1\cdots\bar j_q}dz_{i_1}\wedge \cdots \wedge dz_{i_p}\wedge d\bar z_{j_1}\wedge\cdots \wedge d\bar z_{j_q};\\
       & S_j\mathbf f: = \sum_{\substack{1\le i_1<\cdots<i_p\le n,\\1\le j_1<\cdots<j_q\le n}}S_jf_{i_1\cdots i_p\bar j_1\cdots\bar j_q}dz_{i_1}\wedge \cdots \wedge dz_{i_p}\wedge d\bar z_{j_1}\wedge\cdots \wedge d\bar z_{j_q}.
    \end{split}
\end{equation*}

To construct a solution operator to the $\bar\partial$ equation for $(p, q)$ forms, \cite{NW} introduced  a projection operator $\pi_k$. Precisely speaking, for the $(p,q)$ form $\mathbf f$  given in (\ref{pq}) and each $1\le k\le n$,  $\pi_k\mathbf f$  is a $(p, q-1)$ form with
\begin{equation*}
    \pi_k\mathbf f: = (-1)^p\sum_{\substack{1\le i_1<\cdots<i_p\le n,\\ 1\le k<j_2<\cdots<j_q\le n}}f_{i_1\cdots i_p\bar k\bar j_2\cdots\bar j_q}dz_{i_1}\wedge \cdots \wedge dz_{i_p}\wedge d\bar z_{j_2}\wedge\cdots \wedge d\bar z_{j_q}. 
\end{equation*}
Based on these  definitions, a solution operator of the $\bar\partial$ equation for $(p,q)$ forms on polydisc was  constructed  in \cite{NW} (p. 430). 

\begin{thm}\cite{NW}\label{nw1}
If $\mathbf f\in  C^{1}_{(p,q)}(\bar{\mathbb D}^n)$ is $\bar\partial$-closed on $\mathbb D^n$, then 
\begin{equation}\label{key}
  T\mathbf f: = 
  T_1\pi_1 \mathbf f +T_2S_1\pi_2 \mathbf f+\cdots+ T_nS_1\cdots S_{n-1}\pi_n\mathbf f
\end{equation}
is a solution to $\bar\partial \mathbf u = \mathbf f$ on $\mathbb D^n$. 
\end{thm}

In terms of the norm estimates of the operators, \cite{NW} (p. 435 \& p. 487) proved the following fundamental boundedness for both $T_j$ and $S_j$ operators in the iterated H\"older spaces.

\begin{thm}\cite{NW}\label{nw2}
If $\mathbf f\in \mathcal C^{\alpha}_{(p,q)}(\mathbb D^n)$, then there exists a constant $C$ dependent only on $n$ and $\alpha$ such that 
\begin{equation*}
\begin{split}
    &\|T_j\mathbf f\|_{\mathcal C^{\alpha} (\mathbb D^n)}\le C \|\mathbf f\|_{\mathcal C^\alpha (\mathbb D^n)};\\
    &\|S_j\mathbf f\|_{\mathcal C^{\alpha} (\mathbb D^n)}\le C \|\mathbf f\|_{\mathcal C^\alpha (\mathbb D^n)}.\\
    \end{split}
\end{equation*}
Consequently, the solution operator $T$ defined in (\ref{key}) satisfies
\begin{equation*}
    \|T\mathbf f\|_{\mathcal C^{ \alpha} (\mathbb D^n)}\le C \|\mathbf f\|_{\mathcal C^\alpha (\mathbb D^n)}.
\end{equation*}
\end{thm}
\medskip

\begin{rem}\label{mm}
a). Theorem \ref{nw1} was initially constructed for polydiscs in \cite{NW}. In fact, in exactly the same way  there (p. 430 \cite{NW}), one can show that  (\ref{key}) solves the $\bar\partial$ problem pointwisely on  arbitrary product domains when the datum is $ C_{(p,q)}^{1}$ up to the boundary.

\medskip
 b).  We suspect that the approach used in the proof of Theorem \ref{nw2}  could  be applied to  general product domains, since the domain under consideration  in \cite{NW} was  exclusively polydiscs which carry rich symmetry. 
 \medskip
 
 c). As will be seen in Example \ref{2222}, the estimate of $S_j$ in Theorem \ref{nw2} fails if we replace $\mathcal C^\alpha (\mathbb D^n)$ by the (standard) H\"older space $ C^\alpha (\mathbb D^n)$. 
 \medskip
 
 d).  Another type of an iterated H\"older space $\Lambda_2^\alpha$  was defined by Chen and McNeal \cite{ChM2} for a product of two general bounded domains. In the context of a product of two planar  domains $D_1$ and $D_2$, 
 $$\Lambda_2^\alpha(D_1\times D_2): = \{f\in C(D_1\times D_2): \|f\|_{\Lambda_2^\alpha(D_1\times D_2)}=|f|_{C(D_1\times D_2)} +H_\alpha^{(2)}[f]<\infty  \}.$$ 
$\Lambda_2^\alpha$ is different from $\mathcal C^\alpha$ in that the sum part in (\ref{hh1}) for $\mathcal C^\alpha$  is replaced by the single term $H_\alpha^{(2)}[f]$ for $\Lambda_2^\alpha$. As a matter of fact, $ \|\cdot\|_{\mathcal C^\alpha(\mathbb D^2)}\ge \|\cdot\|_{\Lambda_2^\alpha(\mathbb D^2)}$ and so $\mathcal C^\alpha(\mathbb D^2)\subset \Lambda_2^\alpha(\mathbb D^2)$. 
 
 In \cite{ChM2}, it was  shown that  for any $\bar\partial$-closed $(0,1)$ form $\mathbf f = f_1d\bar z_1 +f_2d\bar z_2$ with  $f_1, f_2, \frac{\partial f_1}{\partial\bar z_2}\in \Lambda_2^\alpha(D_1\times D_2)$, there exists a solution $T\mathbf f\in \Lambda_2^\alpha(D_1\times D_2)$ to $\bar\partial u =\mathbf f$ such that
 $$\|T\mathbf f\|_{\Lambda_2^\alpha(D_1\times D_2)}\le C \left(\|\mathbf f\|_{\Lambda_2^\alpha(D_1\times D_2)} +\left\|\frac{\partial  f_1}{\partial\bar z_2}\right\|_{\Lambda_2^\alpha(D_1\times D_2)}\right).$$
 
  \cite{ChM2} compared $\Lambda_2^\alpha$ with (the standard) $C^\alpha$  by  constructing an example  in $\Lambda_2^\alpha({\mathbb D}^2)$ but not in $C^\alpha({\mathbb D}^2)$. They also attempted to find a Lipschitz function in $C^{0,1}({\mathbb D}^2)$ but not in $\Lambda_2^\alpha({\mathbb D}^2)$ for any $0<\alpha<1$. However,  this would contradict with (\ref{incl}) because for any $0<\alpha<1$, one necessarily has 
  $$ C^{0,1}({\mathbb D}^2)\subset \mathcal C^{\frac{\alpha}{2}}({\mathbb D}^2)\subset  \Lambda_2^{\frac{\alpha}{2}}({\mathbb D}^2).$$
  The mistake is due to the fact that  the constant $C$ in part (3) of Example 5.6 \cite{ChM2}  actually goes to $0$. Thus the limit there would not necessarily go to $\infty$ as they have claimed.
\end{rem}

\medskip

\section{Sharp H\"older bounds of the Cauchy type operators on  product domains}

Let $D_j\subset\mathbb C$, $j= 1, \ldots, n$, be a bounded domain with  $C^{k+1,\alpha}$ boundary, $n\ge 2$,   $k\in \mathbb Z^+\cup \{0\}, 0<\alpha\le 1$, and $\Omega: = D_1\times\cdots\times D_n$. 
Theorem \ref{holderC}-\ref{lpC} immediately imply the following lemma.
\medskip

\begin{lem}\label{123}
 There exists a constant $C$  dependent only on $\Omega, k$ and $\alpha$, such that for any $j=1,\ldots, n$, $f \in C^{k, \alpha}(\Omega)$,  $0<\alpha<1$, $k\in \mathbb Z^+\cup \{0\}$, $\gamma\in \mathbb Z^+\cup \{0\}$ with $\gamma\le k$, %  and for each $(z_1,\ldots, z_{j-1},z_{j+1}, \ldots, z_n)\\ \in D_1\times\cdots\times D_{j-1}\times  D_{j+1}\times\cdots\times D_n$ and integer $0\le \gamma\le k$,   $ T_jf(z_1,\ldots, z_{j-1},\zeta,z_{j+1}, \ldots, z_n)$ and $S_jf(z_1,\ldots, z_{j-1},\zeta,z_{j+1}, \ldots, z_n)$  as functions of $\zeta\in D_j$ satisfy
  \begin{equation*}
    \begin{split}
    \sup_{\substack{z_l\in D_l,\\ 1\le l(\ne j)\le n}} \|D_j^\gamma T_j f(z_1,\ldots, z_{j-1},\cdot,z_{j+1}, \ldots, z_n)\|_{C^{\alpha}(D_j)}\le&\left\{
      \begin{array}{cc}
      C\|f\|_{C(\Omega)}, &\gamma=0 \\
     C\|f\|_{C^{\gamma-1, \alpha}(\Omega)}, &\gamma\ge 1
     \end{array}
\right.  \le C\|f\|_{C^{\gamma, \alpha}(\Omega)};\\
    \sup_{\substack{z_l\in D_l,\\ 1\le l(\ne j)\le n}}\|D_j^\gamma S_j f(z_1,\ldots, z_{j-1},\cdot,z_{j+1}, \ldots, z_n)\|_{C^{\alpha}(D_j)}\le& C\|f\|_{C^{\gamma, \alpha}(\Omega)}.
    \end{split}
  \end{equation*}
  Here $D_j^\gamma$ represents any $\gamma$-th derivative operator with respect to the $j$-th variable.
\end{lem}

\medskip

\medskip

Although the solid Cauchy integral operator $T$ defined in Section 2 is a smoothing operator  in  dimension one, $T_j$ in (\ref{TS}) does not improve regularity along slice of higher dimensional domains, as demonstrated by the following example.

\begin{example}\label{1111}
Consider $f(z_1, z_2)= |z_2|^\alpha$ on $\mathbb D^2$. Then $f\in C^{\alpha}(\mathbb D^2)$. However a straight forward computation shows that $T_1f(z_1, z_2) = \bar z_1|z_2|^\alpha\notin C^{\alpha+\epsilon}(\mathbb D^2)$ for any $\epsilon>0$. 
\end{example}

On the other hand, in  contrast to the boundary Cauchy integral operator $S$ in one dimensional case, its counterpart $S_j$ in (\ref{TS}) no longer maintains H\"older regularity in higher dimensions. Indeed,  Tumanov (\cite{Tumanov} p.486) constructed the following concrete function $\tilde f\in C^\alpha(\partial\mathbb D\times \mathbb D)$ but $S_1\tilde f\notin C^\alpha(\partial\mathbb D\times \mathbb D), 0<\alpha<1$.

\begin{example}\label{2222}\cite{Tumanov} Define for $z_2\in \mathbb D$,
$$
  \tilde f(e^{i\theta}, z_2) =
  \left\{
      \begin{array}{cc}
     |z_2|^\alpha, & -\pi\le \theta \le -|z_2|^\frac{1}{2}; \\
     \theta^{2\alpha}, & -|z_2|^\frac{1}{2}\le \theta\le 0; \\
      \theta^\alpha, & 0\le \theta\le |z_2|;\\
      |z_2|^\alpha, &|z_2|\le \theta\le \pi.
    \end{array}
\right.$$
Then $\tilde f\in C^\alpha(\partial \mathbb D \times \mathbb D)$. However $S_1 \tilde f \notin C^\alpha(\partial\mathbb D\times \mathbb D)$.  Extend $\tilde f$ onto $\mathbb D^2$, denoted as $f$, such that $f\in C^\alpha(\mathbb D^2)$. One can check that  $S_1f\notin C^\alpha(\mathbb D^2)$. See \cite{PZ} for more details of the verification.
\end{example}

%To construct a function $f\in C^\alpha(\mathbb D^2)$ such that $S_1f\notin C^\alpha(\mathbb D^2)$, one just needs to
%In view of the example, Proposition \ref{HolderS} is optimal   for $S_j$ between H\"older spaces. Therefore Proposition \ref{HolderT} is sharp for $T_j$ in H\"older spaces.
\medskip

In view of Example \ref{1111}-\ref{2222}, Theorem \ref{Holder}  characterizes  the optimal H\"older bounds of the two Cauchy type operators $T_j$ and $S_j$.
\medskip

\begin{proof}[Proof of Theorem \ref{Holder}] a).
%The last statement is clear in view of Theorem \ref{lpC}. 
We only prove (\ref{Tb}) when $j=1$ and $n=2$ for simplicity. The other cases are proved accordingly.

We first show $\|T_1f\|_{C^k(\Omega)}\le C\|f\|_{C^{k, \alpha}(\Omega)}$ for some constant $C$ independent of $f$. %Write $D^\gamma=D^{\gamma_1}_1D^{\gamma_2}_2$ with $|\gamma|\le k$.
  %Then $\|D^\gamma T_1 f\|_{C(\Omega)} = \| T_1 f\|_{C^{|\gamma|}(\Omega)}\le C\| f\|_{C^{k, \alpha}(\Omega)}$ by Lemma \ref{123}.
Write $D^\gamma = D^{\gamma_1}_1D^{\gamma_2}_2$,  $\gamma_1+\gamma_2\le k$.
 Then $D^\gamma T_1 f = D^{\gamma_1}_1 T_1(D_2^{\gamma_2} f)$. Hence by Lemma \ref{123}, $$\|D^\gamma T_1 f\|_{C(\Omega)} =  \sup_{z_2\in D_2}\|D^{\gamma_1}_1 T_1(D^{\gamma_2}_2 f)(\cdot, z_2)\|_{C(D_1)} \le  C\|D^{\gamma_2}_2 f\|_{C^{\gamma_1}(\Omega)}\le C\|f\|_{C^{k, \alpha}(\Omega)}.$$

Next, we show  $H^\alpha[D^\gamma T_1f]\le C\|f\|_{C^{k, \alpha}(\Omega)}$ for some constant $C$ independent of $f$ for all $|\gamma|=k$.
By  Lemma \ref{123},  for each $z_2\in D_2$, $D^\gamma T_1 f(\zeta, z_2)$ as a function of $\zeta\in D_1$ satisfies $$H_1^\alpha[D^\gamma T_1 f(\cdot, z_2)]\le\|D^{\gamma_1}_1 T_1(D^{\gamma_2}_2 f)(\cdot, z_2)\|_{C^\alpha(D_1)}
\le C\|D^{\gamma_2}_2 f\|_{C^{\gamma_1, \alpha}(\Omega)}\le C\|f\|_{C^{k, \alpha}(\Omega)}$$ for some constant $C$ independent of $f$ and $z_2$.

 On the other hand, let $z'_2(\ne z_2)\in D_2$ and consider $F_{z_2, z'_2}(\zeta): = \frac{D^{\gamma_2}_2 f(\zeta, z_2)-D^{\gamma_2}_2 f(\zeta, z'_2)}{|z_2-z'_2|^\alpha}$ on $D_1$. Since $f\in C^{k,\alpha}(\Omega)$, it follows $F_{z_2, z'_2}\in C^{\gamma_1}(D_1)$ and $\| F_{z_2, z'_2}\|_{C^{\gamma_1}(D_1)}\le \|f\|_{C^{k,\alpha}}(\Omega)$. If $\gamma_1=0$, by Lemma \ref{123}, $$\|D^{\gamma_1}_1T_1 F_{z_2, z'_2}\|_{C(D_1)} =\| T_1 F_{z_2, z'_2}\|_{C(D_1)} \le C \|F_{z_2, z'_2}\|_{ C^{0}(D_1)}  \le C\|f\|_{C^{k,\alpha}(\Omega)}, $$where $C$ is independent of $f$, $z_2$ and $z'_2$. For $\gamma_1\ge 1$,
  we have by Lemma \ref{123}, $$\|D^{\gamma_1}_1T_1 F_{z_2, z'_2}\|_{C(D_1)}\le C \|F_{z_2, z'_2}\|_{ C^{\gamma_1-1, \alpha}(D_1)} \le C\| F_{z_2, z'_2}\|_{C^{\gamma_1}(D_1)}  \le C\|f\|_{C^{k,\alpha}(\Omega)}$$ for some constant C independent of $f$, $z_2$ and $ z'_2$. In sum, for each fixed $z_1\in D_1$,  $$\frac{|D^\gamma T_1 f(z_1, z_2)-D^\gamma T_1f(z_1, z'_2)|}{|z_2-z'_2|^\alpha} = |D^{\gamma_1}_1 T_1 F_{z_2, z'_2}(z_1)|\le\|D^{\gamma_1}_1T_1 F_{z_2, z'_2}\|_{C(D_1)}\le C\|f\|_{C^{k,\alpha}(\Omega)},$$
where $C$ is independent of $f$, $z_1, z_2$ and $z'_2$. We have thus proved  $H_2^\alpha[D^\gamma T_1 f(z_1, \cdot)]\le C\|f\|_{C^{k,\alpha}(\Omega)}$ with $C$ independent of $f$ and $z_1$, and  (\ref{Tb}) as a consequence of Lemma \ref{comp}.

 \medskip

b).   As in  part a), we only prove (\ref{Sb}) for $j=1$ and $n=2$. Let $|\gamma|\le k$. Since $S_1 f$ is holomorphic with respect to $z_1$ variable, we can further assume $D^\gamma =\partial_1^{\gamma_1}D_2^{\gamma_2}$. 
 Write $\partial D_1 = \cup_{j=1}^N \Gamma_j$, where each  Jordan curve $\Gamma_j$  is connected, positively oriented  with respect to $D_1$, and  of total arclength $s_j$. %So the total arclength of $\partial D_1$ is $s_0:=\sum_{j=1}^N s_j$. 
 Let $\zeta_1(s)$ be a parameterization of $\partial D_1$ in terms of the arclength variable $s$, such that  $\zeta_1|_{ s\in [\sum_{m=1}^{j-1} s_m, \sum_{m=1}^{j} s_m)}$ is  a $C^{k+1, \alpha}$ parametrization of $\Gamma_j$.  In particular, $\bar\zeta_1' = \frac{1}{\zeta_1'}\ne 0$ on $\partial D_1$.
 For any $(z_1, z_2)\in \Omega$, it follows by integration by part,
   \begin{equation*}
  \begin{split}
   \partial_1 S_1 f(z_1, z_2)   = &\frac{1}{2\pi i}     \sum_{j=1}^N\int_{\sum_{m=1}^{j-1} s_m}^{\sum_{m=1}^{j} s_m}\partial_{z_1}(\frac{1}{\zeta_1(s)-z_1}) f(\zeta_1(s), z_2)\zeta_1'(s)ds \\
   = &-\frac{1}{2\pi i}     \sum_{j=1}^N\int_{\sum_{m=1}^{j-1} s_m}^{\sum_{m=1}^{j} s_m}\partial_{s}(\frac{1}{\zeta_1(s)-z_1}) f(\zeta_1(s), z_2)ds \\
   =&\frac{1}{2\pi i}     \sum_{j=1}^N\int_{\sum_{m=1}^{j-1} s_m}^{\sum_{m=1}^{j} s_m}\frac{\partial_{s}( f(\zeta_1(s), z_2))}{\zeta_1(s)-z_1}ds \\
   =&\frac{1}{2\pi i}     \sum_{j=1}^N\int_{\sum_{m=1}^{j-1} s_m}^{\sum_{m=1}^{j} s_m}\frac{\partial_{\zeta_1}( f(\zeta_1(s), z_2))\zeta_1'(s)+\partial_{\bar\zeta_1}( f(\zeta_1(s), z_2))\bar\zeta_1'(s)  }{\zeta_1(s)-z_1}ds \\
   =&\frac{1}{2\pi i}  \int_{\partial D_1}\frac{\partial_{\zeta_1} f(\zeta_1, z_2)+\partial_{\bar\zeta_1}( f(\zeta_1, z_2))(\bar\zeta_1'(s))^2  }{\zeta_1-z_1}d\zeta_1.
          \end{split}
  \end{equation*}

Applying the  integration by part inductively, one shall see  $D^\gamma S_1f  = S_1\tilde f$, for some function $\tilde f$ satisfying   $\|\tilde f\|_{C^{\alpha}(\Omega)}\le  \| f\|_{C^{k, \alpha}(\Omega)}$.  Therefore, we only need to prove
  $\|S_1 \tilde f\|_{C^{\alpha'}(\Omega)}\le C\|\tilde f\|_{C^{ \alpha}(\Omega)}$ for some constant $C$ independent of $\tilde f$.

Firstly,  by Lemma \ref{123}, one has $$\| S_1 \tilde f\|_{C(\Omega)} =\sup_{z_2\in D_2}\| S_1\tilde f(\cdot, z_2)\|_{C(D_1)}\le C\| \tilde f\|_{C^{ \alpha}(\Omega)}$$  for some constant $C$ independent of $\tilde f$. % Write $D^\gamma=D^{\gamma_1}_1D^{\gamma_2}_2$  for $|\gamma|\le k$.

Next, we  show $H^{\alpha'}[S_1\tilde f]\le C\|\tilde f\|_{C^{ \alpha}(\Omega)}$ %, or equivalently, $\|D^\gamma S_1\tilde f\|^{(0)}_{C^{\alpha'}(\Omega)}\le C\|\tilde f\|_{C^{k, \alpha}(\Omega)}$
for some constant $C$ independent of $\tilde f$.
 By  Lemma \ref{123},  for each $z_2\in D_2$, $S_1 \tilde f(\zeta, z_2)$ as a function of $\zeta\in D_1$ satisfies $$H_1^{\alpha'}[S_1 \tilde f(\cdot, z_2)]\le \| S_1 \tilde f(\cdot, z_2)\|_{C^{\alpha'}(D_1)}\le C\|\tilde f\|_{C^{\alpha'}(\Omega)}\le C\|\tilde f\|_{C^{\alpha}(\Omega)}$$ for some constant $C$ independent of $\tilde f$ and $z_2$.

 We further show there exists a constant $C$ independent of $\tilde f$ and $z_1$,  such that for each $z_1\in D_1$, $H^{\alpha'}_2[S_1 \tilde f(z_1, \cdot)]\le C\|\tilde f\|_{C^{\alpha}(\Omega)}$. %Since $S_1 f$ is holomorphic with respect to $z_1$ variable, we can assume $D^\gamma =\partial_1^{\gamma_1}D_2^{\gamma_2}$. Note by integration by part, we can write
 % we only need to show   for each $z_1\in D_1$, $\|S_1 \tilde f(z_1, \cdot)\|^{(0)}_{C^{\alpha'}(D_2)}\le C\|\tilde f\|_{C^{ \alpha}(\Omega)}$ as a function of $\zeta\in D_2$.
  First consider $z_1=t_1\in \partial D_1$. Without loss of generality, assume $t_1\in \Gamma_1$ with  $\zeta_1|_{s=0}=t_1$. Since $\partial D_1\in C^1$, $\partial D_1$ satisfies the so-called {\em chord-arc} condition. In other words, for any $\zeta_1(s), \zeta_1(s')\in \Gamma_j, j=1, \ldots, N$,
there exists a  constant $C\ge 1$  dependent only on $\partial D_1$ such that
\begin{equation*}
    |\zeta_1(s)-\zeta_1(s')|\le \min\{s-s', s'+s_j-s\} \le C|\zeta_1(s)-\zeta_1(s')|.
\end{equation*}
Here $s_j$ is the total arclength of $\Gamma_j$. In particular,  when $0\le s\le s_1$, \begin{equation}\label{arc}
   |d\zeta_1|\le C|ds|\ \  \text{ and}\ \  |\zeta_1(s)-t_1|\ge C\min\{s, s_1-s\}
\end{equation}
  for some constant $C$ dependent only on $D_1$.  By Sokhotski--Plemelj Formula (see \cite{Muskhelishvili} for instance), the non-tangential limit of $S_1\tilde f$ at $(t_1, z_2)\in \partial D_1\times D_2$ is
 $$\Phi_1\tilde f(t_1, z_2): = \frac{1}{2\pi i}\int_{\partial D_1}\frac{\tilde f(\zeta_1, z_2)}{\zeta_1-t_1}d\zeta_1 + \frac{1}{2}\tilde f(t_1, z_2).$$
 Here the first term is interpreted as the Principal Value.  % and $\pm$ sign depends on whether $D_1$ lies inside  or outside of the Jordan curve  of $\partial D_1$  where $t_1$ stays.
% Denote by $F(\zeta_1, z_2)= D^{\gamma_2}_{z_2}f(\zeta_1, z_2)$.
We shall prove that for $z_2, z'_2\in D_2$ with $h: =|z_2-z'_2|\ne 0$,
 \begin{equation*}
   %|\int_{\partial D_1}\frac{\tilde f(\zeta_1, z_2)}{\zeta_1-t_1}d\zeta_1 - \int_{\partial D_1}\frac{\tilde f(\zeta_1, z'_2)}{\zeta_1-t_1}d\zeta_1|
   |\Phi_1\tilde f(t_1, z_2) - \Phi_1\tilde f(t_1, z_2')|\le C h^{\alpha'} \|\tilde f\|_{C^{\alpha}(\Omega)}
 \end{equation*}
 for some constant $C$ independent of $ \tilde f, t_1, z_2$ and $z'_2$, essentially following the idea of  Muskhelishvili \cite{Muskhelishvili}. %Hence, $\|S_1 \tilde f(t_1, \cdot)\|^{(0)}_{C^{ \alpha'}(D_2)}\le C\|\tilde f\|_{C^{\alpha}(\Omega)}$. %If so, then $$\frac{|D^\gamma S_1 \tilde f(t_1, z_2)-D^\gamma S_1\tilde f(t_1, z'_2)|}{|z_2-z'_2|^{\alpha'}} = \frac{|D^{\gamma_1}_1 S_1 (F(\cdot, z_2)- F(\cdot, z'_2))(t_1)|}{h^{\alpha'}}\le\|D^{\gamma_1}_1S_1 F_{z_2, z'_2}\|_{L^\infty(D_1)}\le C\|f\|_{C^{k,\alpha}(\Omega)},$$

Let $h_0$ be a positive number such that $h^{\alpha-\alpha'}\ln \frac{1}{h}\le 1$ for $0<h\le h_0<\min\{1, \frac{s_1}{2}\}$. %, where $r_0: = \inf_{1\le j\ne m\le N}\{|z-z'|: z\in \Gamma_j, z'\in \gamma_m\}>0$.
Then $h_0$ depends only on $\alpha$ and $\alpha'$. When $h\ge h_0$,
\begin{equation*}
  % |\int_{\partial D_1}\frac{\tilde f(\zeta_1, z_2)}{\zeta_1-t_1}d\zeta_1 - \int_{\partial D_1}\frac{\tilde f(\zeta_1, z'_2)}{\zeta_1-t_1}d\zeta_1|
   |\Phi_1\tilde f(t_1, z_2) - \Phi_1\tilde f(t_1, z_2')|\le 2 \|S_1\tilde f\|_{C^{0}(\Omega)} \le C\|\tilde f\|_{C^{\alpha}(\Omega)} \le \frac{C}{h^{\alpha'}_0}h^{\alpha'}\|\tilde f\|_{C^{\alpha}(\Omega)} \le  C h^{\alpha'} \|\tilde f\|_{C^{\alpha}(\Omega)}
 \end{equation*}
 for some constant $C$ independent of $ \tilde f, t_1, z_2$ and $z'_2$.

 When $h< h_0$, write
 \begin{equation*}
 \begin{split}
  %\int_{\partial D_1}\frac{\tilde f(\zeta_1, z_2)-\tilde f(\zeta_1, z'_2)}{\zeta_1-t_1}d\zeta_1
   \Phi_1\tilde f(t_1, z_2) - \Phi_1\tilde f(t_1, z_2')=& \frac{1}{2\pi i}\int_{\partial D_1}\frac{\tilde f(\zeta_1, z_2)-\tilde f(t_1, z_2) -\tilde f(\zeta_1, z'_2)+ \tilde f(t_1, z'_2)}{\zeta_1-t_1}d\zeta_1\\
   &+ \frac{\tilde f(t_1, z_2)- \tilde f(t_1, z'_2)}{2\pi i}\int_{\partial D_1}\frac{1}{\zeta_1-t_1}d\zeta_1 +\frac{\tilde f(t_1, z_2)- \tilde f(t_1, z'_2)}{2}\\
   =& \frac{1}{2\pi i}\int_{\partial D_1}\frac{\tilde f(\zeta_1, z_2)-\tilde f(t_1, z_2) -\tilde f(\zeta_1, z'_2)+ \tilde f(t_1, z'_2)}{\zeta_1-t_1}d\zeta_1
   + \\
   &+(\tilde f(t_1, z_2)- \tilde f(t_1, z'_2))\\
    =&: I+II.
 \end{split}
 \end{equation*}
Here the second equality has used the fact that $\int_{\partial D_1}\frac{1}{\zeta_1-t_1}d\zeta_1=\pi i$ when interpreted as the Principal Value, due to the positive orientation of $\partial D_1$. Obviously $$|II|\le C h^\alpha \|\tilde f\|_{C^{ \alpha}(\Omega)}$$ for some constant $C$ independent of $\tilde f, t_1, z_2$ and $z'_2$.

Let $l$ be the arc on $\partial D_1$ that are centered at $t_1$ with arclength $2h$. Consequently, $l\subset \Gamma_1$ due to the fact that $h\le \frac{s_1}{2}$. Write $I$ as follows.
 \begin{equation*}
 \begin{split}
   I = &\frac{1}{2\pi i}\int_{\Gamma_1\setminus l}\frac{\tilde f(\zeta_1, z_2)-\tilde f(t_1, z_2) -\tilde f(\zeta_1, z'_2)+\tilde f(t_1, z'_2)}{\zeta_1-t_1}d\zeta_1\\
    &+ \frac{1}{2\pi i}\int_{l}\frac{(\tilde f(\zeta_1, z_2)-\tilde f(t_1, z_2)) -(\tilde f(\zeta_1, z'_2)- \tilde f(t_1, z'_2))}{\zeta_1-t_1}d\zeta_1\\
    &+\frac{1}{2\pi i}\int_{\cup_{j=2}^N \Gamma_j}\frac{(\tilde f(\zeta_1, z_2)-\tilde f(\zeta_1, z'_2))-(\tilde f(t_1, z_2)-\tilde f(t_1, z'_2))}{\zeta_1-t_1}d\zeta_1 \\
   =&: I_1+I_2+I_3.
   \end{split}
 \end{equation*}
For $I_3$, since $\cup_{j=2}^N \Gamma_j$ does not intersect with $\Gamma_1$ and $t_1\in \Gamma_1$, $|\zeta_1-t_1|\ge C$ on $\cup_{j=2}^N \Gamma_j$ for some positive $C$ dependent only on $\partial D_1$. On the other hand, the absolute value of the numerator in  $I_3$ is less than $Ch^\alpha \|\tilde f\|_{C^{\alpha}(\Omega)}$. It immediately follows that $$|I_3|\le Ch^\alpha\|\tilde f\|_{C^{\alpha}(\Omega)}.$$  For $I_2$, the absolute value of the numerator of the integrand is less than $C|\zeta_1-t_1|^\alpha \|\tilde f\|_{C^{\alpha}(\Omega)}$. %since the numerator is less than $C|\zeta_1-t_1|^\alpha \|\tilde f\|_{C^{\alpha}(\Omega)}$,
We infer from (\ref{arc}) that
  $$|I_2|\le C\|\tilde f\|_{C^{ \alpha}(\Omega)}\int_l\frac{1}{|\zeta_1-t_1|^{1-\alpha}}|d\zeta_1|\le C\|\tilde f\|_{C^{ \alpha}(\Omega)}\int_0^h\frac{1}{s^{1-\alpha}}ds \le Ch^\alpha\|\tilde f\|_{C^{ \alpha}(\Omega)}$$ for some constant $C$ independent of $\tilde f, t_1, z_2$ and $z'_2$.
Now we treat with the remaining term $I_1$. Rearrange  $I_1$ so it becomes
 \begin{equation*}
   |I_1| \le |\frac{1}{2\pi i}\int_{\gamma_1\setminus l}\frac{\tilde f(\zeta_1, z_2) - \tilde f(\zeta_1, z'_2)}{\zeta_1-t_1}d\zeta_1 |+| \frac{\tilde f(t_1, z_2)-\tilde f(t_1, z'_2)}{2\pi i}\int_{\gamma_1\setminus l}\frac{1}{\zeta_1-t_1}d\zeta_1|.
 \end{equation*}
 The second term of the above inequality is bounded by $Ch^\alpha\|\tilde f\|_{C^{ \alpha}(\Omega)}$ for some constant $C$ independent of $\tilde f, t_1, z_2$ and $z'_2$, as in the argument for II. The first term  when $h<h_0$ is bounded by
 \begin{equation*}
   Ch^\alpha\|\tilde f\|_{C^{\alpha}(\Omega)}\int_h^{\frac{s_1}{2}} \frac{1}{s} ds\le Ch^\alpha\ln \frac{1}{h}\|\tilde f\|_{C^{ \alpha}(\Omega)}\le Ch^{\alpha'}\|\tilde f\|_{C^{ \alpha}(\Omega)}.
 \end{equation*}

We have thus shown  there exists a constant $C$ independent of $t_1$ and $\tilde f$, such that for each $z_1=t_1\in \partial D_1$, $H_2^{\alpha'}[\Phi_1 \tilde  f(t_1, \cdot)]\le C\|\tilde f\|_{C^{\alpha}(\Omega)}$. Notice that for each fixed $\zeta\in D_2$, $S_1 \tilde f(z_1, \zeta)$ is holomorphic as a function of $z_1\in D_1$ and $C^\alpha$ continuous up to the boundary  with boundary value equal to $\Phi_1\tilde f(z_1, \zeta)$ by Plemelj--Privalov Theorem. For each fixed $z_2$ and $z_2'$  with $|z_2-z_2'|\ne 0$, applying Maximum Modulus Theorem to  the holomorphic function $\frac{S_1  \tilde f(z_1, z_2) -S_1  \tilde f(z_1, z'_2)}{|z_2-z'_2|^{\alpha'}}$ of $z_1$ in $D_1$, we immediately obtain
\begin{equation*}
  \begin{split}
   \sup_{z_1\in D_1}|\frac{S_1  \tilde f(z_1, z_2) -S_1  \tilde f(z_1, z'_2)}{|z_2-z'_2|^{\alpha'}}|    \le &\sup_{t_1\in\partial D_1}|\frac{\Phi_1  \tilde f(t_1, z_2) -\Phi_1  \tilde f(t_1, z'_2)}{|z_2-z'_2|^{\alpha'}}|\\
   =  &\sup_{t_1\in\partial D_1}H_2^{\alpha'}[\Phi_1 \tilde  f(t_1, \cdot)]\\
   \le & C\|\tilde f\|_{C^{\alpha}(\Omega)},
  \end{split}
\end{equation*}
% each $z_1\in  D_1$, $\|S_1 f(z_1, \zeta)\|_{C^{k, \alpha}(D_1)}\le C\|f\|_{C^{k, \alpha}(\Omega)}$ as a function of $\zeta\in D_2$, where C is independent of $z_1$.
with $C$ independent of $f, z_1, z_2$ and $z'_2$. Therefore $$H^{\alpha'}_2[S_1 \tilde f(z_1, \cdot)]\le C\|\tilde f\|_{C^{\alpha}(\Omega)}$$ with $C$ independent of $\tilde f$ and $z_1$. The proof of (\ref{Sb}) is complete.

 \end{proof}

%\begin{prop}\label{formulan}
%For any $0<\alpha'<\alpha\le 1$, $ T_jS_l$ and $ S_lT_j$ are bounded linear operators sending $C^{k, \alpha}(\Omega)$ into $C^{k, \alpha'}(\Omega),  k\in \mathbb Z^+\cup\{0\}, j, l\in \{1, \ldots, n\}$ and $j\ne l$. Moreover,
%$     T_jS_l=S_lT_j, j\ne l.$
%\end{prop}

%\begin{proof}
 % The proposition follows from Proposition \ref{HolderT}, Proposition \ref{HolderS}, and Fubini's Theorem.
%\end{proof}

\section{Proof of Theorem \ref{main} and Corollary \ref{mains}}

As an immediate consequence of  Theorem \ref{Holder}, we obtain the following theorem.

\begin{thm}\label{ka}
Let $D_j\subset\mathbb C$, $j= 1, \ldots, n,$ be bounded domains with  $C^{k+1,\alpha}$ boundary,   $n\ge 2, k\in \mathbb Z^+\cup \{0\}, 0<\alpha\le 1$, and let $\Omega: = D_1\times\cdots\times D_n$. Let $\mathbf f\in C_{(p, q)}^{k, \alpha}(\Omega)$, $0\le p\le n, 1\le q\le n$. Then  for any $0<\alpha'<\alpha$, $T\mathbf f$ defined in (\ref{key})
belongs to  $ C_{(p,q-1)}^{k, \alpha'}(\Omega)$ with $$\|T\mathbf f\|_{C^{k, \alpha'}(\Omega)}\le C \|\mathbf f\|_{C^{k, \alpha}(\Omega)}.$$ %If further $\mathbf f$ is $\bar\partial$-closed and  $k>0$, then $\bar\partial T\mathbf f = \mathbf f$.
\end{thm}

\begin{proof}
 The operator $T$ defined by (\ref{key}) is well defined on $C_{(p,q)}^{k, \alpha}(\Omega)$ due to  Theorem \ref{Holder}. Choose some positive constant $\epsilon< \frac{\alpha-\alpha'}{n-1}$. Then  $\alpha'+(n-1)\epsilon< \alpha\le 1$. Applying Theorem \ref{Holder} repeatedly, it follows for each $j\le n$,
  \begin{equation*}
    \begin{split}
      \|T_jS_1\cdots S_{j-1}\pi_j \mathbf f\|_{C^{k, \alpha'}(\Omega)}\le C \|S_1\cdots S_{j-1}\pi_j \mathbf f\|_{C^{k, \alpha'}(\Omega)}    \le C \|\pi_j\mathbf f\|_{C^{k, \alpha}(\Omega)}\le C\|\mathbf f\|_{C^{k, \alpha}(\Omega)}.
    \end{split}
  \end{equation*}
 Therefore, $\|T\mathbf f\|_{C^{k, \alpha'}(\Omega)}\le C \|\mathbf f\|_{C^{k, \alpha}(\Omega)} $.

   \end{proof}

\begin{rem}
It is worth pointing out that at the top degree $q=n$, under the same assumptions as in Theorem \ref{ka},   $T\mathbf f$  
will maintain the same regularity as its data to be in  $ C_{(p,n-1)}^{k, \alpha}(\Omega)$ with $$\|T\mathbf f\|_{C^{k, \alpha}(\Omega)}\le C \|\mathbf f\|_{C^{k, \alpha}(\Omega)}.$$
This is because when $q=n$, $\mathbf f =\sum_{1\le i_1<\cdots<i_p\le n}f_{i_1\cdots i_p}dz_{i_1}\wedge\cdots\wedge dz_{i_p}\wedge d\bar z_1\wedge\cdots\wedge d\bar z_n$ for some $f_{i_1\cdots i_p}\in C^{k, \alpha}(\Omega)$. Thus 
$T\mathbf f = (-1)^p\sum_{1\le i_1<\cdots<i_p\le n}T_1f_{i_1\cdots i_p}dz_{i_1}\wedge\cdots\wedge dz_{i_p}\wedge d\bar z_2\wedge\cdots\wedge d\bar z_n\in C_{(p, n-1)}^{k, \alpha}(\Omega)$ by definition (\ref{key}). The desired estimate follows from that of $T_1$ in Theorem \ref{Holder}.
\end{rem}
\medskip

As stated in Remark \ref{mm} a), it was proved in \cite{NW} that if $\mathbf f\in C^{1}_{(p,q)}(\bar\Omega)$ is $\bar\partial$ closed, then (\ref{key})
is a solution to $\bar\partial \mathbf u = \mathbf f$ on $\Omega$. We thus have 

% We  note that although Theorem \ref{ka} is stated for $k\in \mathbb Z^+$, the estimate $\|T\mathbf f\|_{C^{k, \alpha}(\Omega)}\le C \|\mathbf f\|_{C^{k, \alpha}(\Omega)}$ stays true for $k=0$ as well, in view of Proposition \ref{holdern}.

\begin{proof}[Proof of Corollary \ref{mains}]
Observe that $C_{(p,q)}^\infty(\bar\Omega)\subset C_{(p,q)}^{k, \alpha}(\Omega)$ for any integer $k\in \mathbb Z^+\cup\{0\}$ and $0<\alpha\le 1$.  Theorem \ref{mains} follows directly from  the proof of Theorem \ref{ka} and Remark \ref{mm} a).

\end{proof}
\medskip

\begin{rem}
When $\mathbf f\in C_{(p,1)}^{n-1, \alpha}(\Omega)$,  $T$ defined by (\ref{key}) coincides with the solution operator constructed in  \cite{ChM}\cite{FP}  by repeated application of Theorem \ref{CG}.  Therefore the same  supnorm estimate  in \cite{FP} passes onto  $T$   if the data is smooth up to the boundary. It would be interesting to know whether the supnorm estimate holds for $(p, q)$ forms smooth up to the boundary.
\end{rem}

\medskip

Assuming $\mathbf f\in C^{\alpha}(\Omega), 0<\alpha<1$, the $\bar\partial$ equation is interpreted in the sense of distributions. The following proposition shows that $T\mathbf f$ defined by (\ref{key}) solves $\bar\partial \mathbf u =\mathbf f$ in this  sense.

\begin{prop}\label{main1}
Let $D_j\subset\mathbb C,  j= 1, \ldots, n$, be  bounded domains with  $C^{1,\alpha}$ boundary, $n\ge 2$, $0<\alpha\le 1$ and $\Omega: = D_1\times\cdots\times D_n$. Assume $\mathbf f\in C_{(p,q)}^{ \alpha}(\Omega)$ is  $\bar\partial$-closed in $\Omega$ in the sense of distributions, $0\le p\le n, 1\le q\le n$. Then  $\mathbf u: = T\mathbf f$ defined in (\ref{key})  solves  $\bar\partial \mathbf u =\mathbf f$ in $ \Omega$
in the sense of distributions.
\end{prop}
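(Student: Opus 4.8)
The H\"older regularity $T\mathbf f\in C^{\alpha'}(\Omega)$ together with the estimate is already contained in Theorem \ref{ka}, so the only new point is the identity $\bar\partial T\mathbf f=\mathbf f$ in the sense of distributions when $k=0$. Unwinding definitions, it suffices to prove that for every $j\in\{1,\dots,n\}$ and every test function $\phi\in C_c^\infty(\Omega)$ one has $-\int_\Omega T\mathbf f\,\partial_{\bar z_j}\phi\,dV=\int_\Omega f_j\,\phi\,dV$. The obstruction to simply repeating the computation in the proof of Theorem \ref{ka} is that the cross terms there invoke the Cauchy--Green identity of Theorem \ref{CG} on a single slice, which requires $\partial_{\bar\zeta_l}f_l\in L^p$; for merely $C^\alpha$ data this derivative is only a distribution, so the plan is to reduce to the smooth case by approximation and then pass to the limit.

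First I would approximate $\mathbf f$ by smooth $\bar\partial$-closed forms. Let $\mathbf f_\delta$ denote a mollification of $\mathbf f$. Since convolution commutes with $\bar\partial$, the closedness of $\mathbf f$ is inherited by $\mathbf f_\delta$, and by the mollifier convergence result proved in the Appendix one has $\mathbf f_\delta\to\mathbf f$ in $C^{\beta}(\Omega)$ for every $\beta<\alpha$ (convergence in the top H\"older seminorm $C^\alpha$ may fail, which is precisely why a loss is unavoidable), together with a uniform bound $\|\mathbf f_\delta\|_{C^\beta(\Omega)}\le C\|\mathbf f\|_{C^\alpha(\Omega)}$. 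Fixing $0<\alpha'<\beta<\alpha$, the forms $\mathbf f_\delta$ are smooth and $\bar\partial$-closed, hence in particular lie in $C^{1,\beta}(\Omega)$.

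Next I would invoke the already established case of the theorem. Applying Theorem \ref{ka} with $k\ge 1$ to each $\mathbf f_\delta$ gives the classical identity $\bar\partial T\mathbf f_\delta=\mathbf f_\delta$ on $\Omega$; since $T\mathbf f_\delta\in C^1(\Omega)$ and $\phi$ has compact support, integration by parts yields
$$-\int_\Omega T\mathbf f_\delta\,\partial_{\bar z_j}\phi\,dV=\int_\Omega (f_\delta)_j\,\phi\,dV$$
for each $j$. It then remains to let $\delta\to0$. By Propositions \ref{HolderT} and \ref{HolderS} the linear operator $T$ is continuous from $C^\beta(\Omega)$ into $C^{\alpha'}(\Omega)$ (this is exactly the chained estimate used in the proof of Theorem \ref{ka}), so $\mathbf f_\delta\to\mathbf f$ in $C^\beta(\Omega)$ forces $T\mathbf f_\delta=T\mathbf f+T(\mathbf f_\delta-\mathbf f)\to T\mathbf f$ in $C^{\alpha'}(\Omega)$, hence uniformly on $\Omega$. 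The left-hand side above therefore converges to $-\int_\Omega T\mathbf f\,\partial_{\bar z_j}\phi\,dV$, while the uniform convergence $(f_\delta)_j\to f_j$ makes the right-hand side converge to $\int_\Omega f_j\,\phi\,dV$. As $j$ and $\phi$ were arbitrary, this gives $\bar\partial T\mathbf f=\mathbf f$ in $\mathcal D'(\Omega)$.

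The step I expect to be the genuine obstacle is the construction of the approximants, that is, producing smooth, \emph{globally} $\bar\partial$-closed $\mathbf f_\delta$ that converge to $\mathbf f$ in $C^\beta(\Omega)$ \emph{up to the boundary}. A naive interior convolution preserves closedness but is defined only on $\Omega_\delta=\{z:\mathrm{dist}(z,\partial\Omega)>\delta\}$, so the global operators $T_j,S_j$, which integrate over the full factors $D_j$, cannot be applied to it; conversely, first extending $\mathbf f$ across $\partial\Omega$ and then mollifying is defined up to the boundary but destroys closedness near $\partial\Omega$, which is exactly where the cross-term cancellation in the proof of Theorem \ref{ka} breaks down. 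Reconciling these two requirements, by exploiting the product structure to mollify in a way that keeps the form closed while securing convergence in the H\"older norm up to $\partial\Omega$ with the uniform bounds supplied by the Appendix, is the delicate part; once it is in place, the limiting argument above is routine.
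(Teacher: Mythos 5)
Your overall strategy --- mollify, apply the smooth case of Theorem \ref{ka}, and pass to the limit using the continuity of $T$ --- is the right skeleton, and you have correctly located the crux: one cannot obviously produce smooth, globally $\bar\partial$-closed approximants of $\mathbf f$ converging in a H\"older norm up to $\partial\Omega$, since interior convolution is defined only on a proper subdomain while extension-then-mollification destroys closedness near the boundary. But you leave that step unresolved, and it is precisely the step the paper's proof is organized around, so as written your argument has a genuine gap. The paper does not construct such global approximants; it circumvents the problem with a double limit. One exhausts each $D_j$ by an increasing family $D_j^{(l)}\subset\subset D_j$ with $C^{2,\alpha}$ boundaries converging to $\partial D_j$, sets $\Omega^{(l)}=D_1^{(l)}\times\cdots\times D_n^{(l)}$, and introduces the solution operators $T^{(l)}$ of (\ref{key}) \emph{associated to $\Omega^{(l)}$}. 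On each fixed $\Omega^{(l)}$ the mollification $\mathbf f^\epsilon$ is smooth and $\bar\partial$-closed up to $\partial\Omega^{(l)}$, so Theorem \ref{ka} with $k=1$ applies there and, letting $\epsilon\to 0$, gives $(T^{(l)}\mathbf f,\bar\partial^*\phi)_{\Omega^{(l_0)}}=(\mathbf f,\phi)_{\Omega^{(l_0)}}$ for any test function $\phi$ supported well inside $\Omega^{(l_0-2)}$. The remaining work --- absent from your proposal --- is the second limit $l\to\infty$: one must show $(T^{(l)}\mathbf f,\bar\partial^*\phi)\to(T\mathbf f,\bar\partial^*\phi)$, and this is not a consequence of the boundedness of a single operator because the operators themselves change with $l$ (the contours $\partial D_k^{(l)}$ appearing in $S_k^{(l)}$ move). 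The paper handles it by writing each term $T_j^{(l)}S_1^{(l)}\cdots S_{j-1}^{(l)}f_j$ as an explicit iterated integral, using the uniform lower bound $|\zeta_k-z_k|\ge\delta_0>0$ for $z$ in the support of $\phi$ and $\zeta_k\in\partial D_k^{(l)}$ to obtain an integrable dominating function, and then applying the Dominated Convergence Theorem.

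A secondary point worth noting: Proposition \ref{main1} assumes only $C^{1,\alpha}$ boundary, whereas applying Theorem \ref{ka} with $k\ge 1$ (as your plan requires) needs $C^{k+1,\alpha}$ boundary for the estimates of Theorem \ref{holderC}. The interior exhaustion resolves this as well, since the $D_j^{(l)}$ can be chosen with $C^{2,\alpha}$ boundary regardless of the regularity of $\partial D_j$; your approach, which works directly on $\Omega$, has no access to the $k=1$ case of Theorem \ref{ka} under the stated hypotheses.
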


%In view of Theorem \ref{CG},  we require the data space to be slightly stronger for $C^{\alpha}$ estimates. %The following is the proof of Theorem \ref{main1} which justifies $T$ operator in \ref{key} is the solution operator in weak sense.

%\begin{thm}
 % Assume $\partial D_j\in C^{1, \alpha}, j=1, \ldots, n$. Let $f=\sum_{j=1}^nf_jd\bar z_j\in C^{  \alpha}(\Omega)\cap W^1_p(\Omega)$ be a $\bar\partial$-closed (0,1) form in the weak sense on $\Omega$ for some $p>2$. Then $u: =Tf\in C^ {  \alpha}$  defined in (\ref{key}) solves  $\bar\partial u =f$ weakly in $\Omega$ and satisfies  $\|Tf\|_{C^{  \alpha}}\le C\|f\|_{C^{  \alpha}}$, where $C$ depends only on $\Omega, \alpha$.
%\end{thm}

\begin{proof}
  Given $\mathbf f\in \C_{(p,q)}^{\alpha}(\Omega)$  for $0<\alpha\le 1$, $T\mathbf f\in C_{(p,q-1)}^ {\alpha'}(\Omega)$ with $ 0<\alpha'<\alpha$  by Theorem \ref{ka} with $k=0$. We use the standard mollifier argument to show that $T\mathbf f$ solves $\bar\partial \mathbf u =\mathbf f$ in $\Omega$ in the sense of distributions. % The setup is standard, similar to \cite{ChM} with a minor modification.

 For each $j\in \{1, \ldots, n\}$, let $ \{D^{(l)}_j\}_{l=1}^\infty$   be a family of strictly increasing open subsets  of $D_j$ such that\\
 a). for  $l\ge N_0\in \mathbb N$, $bD^{(l)}_j$ is  $C^{2, \alpha}$,  $\frac{1}{l+1}< dist(D^{(l)}_j, D_j^c)<\frac{1}{l}$;\\
 b). $H_j^{(l)}: \bar D_j\rightarrow \bar D_j^{(j)}$ is a $C^1$ diffeomorphism  with $\lim_{l\rightarrow \infty} \|H_j^{(l)}-Id\|_{C^1(D_j)}=0$.\\

 Let $\Omega^{(l)}= D^{(l)}_1\times\cdots\times D^{(l)}_n$ be the product of those planar domains.  Denote by  $T^{(l)}_j, S^{(l)}_j$ and $T^{(l)}$ the operators defined in (\ref{TS}) and (\ref{key}) accordingly, with $\Omega$ replaced by $\Omega^{(l)}$. Then $T^{(l)} \mathbf f\in C_{(p,q-1)}^{\alpha'}(\Omega^{(l)})$ for each $0<\alpha'<\alpha$. Adopting the mollifier argument to $\mathbf f\in C_{(p,q)}^{\alpha}(\Omega)$, we  obtain $\mathbf f^\epsilon\in C_{(p,q)}^{1, \alpha}( \Omega^{(l)})$ such that for each fixed $0<\alpha'<\alpha$, $\|\mathbf f^\epsilon - \mathbf f\|_{C^{\alpha'}(\Omega^{(l)})}\rightarrow 0$ (see the Appendix)  as $\epsilon\rightarrow 0$ and  $\bar\partial \mathbf f^\epsilon =0$ on $\Omega^{(l)}$.

Fix an $\alpha'(<\alpha)$. For each  $l$,  $ T^{(l)} \mathbf f^\epsilon\in C_{(p,q-1)}^{1, \alpha'}(\Omega^{(l)})$ when $\epsilon$ is small and $\bar\partial T^{(l)} \mathbf f^\epsilon =\mathbf f^\epsilon$ in $ \Omega^{(l)}$
by Theorem \ref{ka}. Furthermore, applying Theorem  \ref{Holder} at $k=0$,   we have $\|T^{(l)} \mathbf f^\epsilon - T^{(l)} \mathbf f\|_{C^{0}(\Omega^{(l)})} \le C \|\mathbf f^\epsilon - \mathbf f\|_{C^{\alpha'}(\Omega^{(l)})}\rightarrow 0$ as $\epsilon\rightarrow 0$. We thus  have  $\lim_{\epsilon\rightarrow 0}T^{(l)} \mathbf f^\epsilon$ exists in $\Omega^{(l)}$
and is equal to $T^{(l)} \mathbf f \in C_{(p,q-1)}^{\alpha'}(\Omega^{(l)})$ pointwisely. %Moreover, for any $\phi\in C^\infty_c(\Omega^{(l)})$,

%Extend $T^{(k)}f$ to a function on $\Omega$ by assigning 0 outside $\Omega^{(k)}$ and still denoted as $T^{(k)}f$, then $\|Tf -T^{(k)}f\|_{L^1(\Omega)}\le$

Given a testing $(p, q-1)$ form $\phi$ with a compact support  $K$, let $l_0\ge N_0$ be such that $K \subset \Omega^{(l_0-2)}$. % and $dist( K,(\Omega^{(l_0)})^c)>\delta_0$ for some $\delta_0>0$.
Denote by  $(\cdot, \cdot)_{\Omega}$ (and $(\cdot, \cdot)_{\Omega^{(l_0)}}$) the inner product(s) in $L^2_{(p,q-1)}(\Omega)$ (and in $L^2_{(p,q-1)}({\Omega^{(l_0)}}$), respectively), and  $\bar\partial^*$ the formal adjoint of $\bar\partial$. 
For $l\ge l_0$, one has
\begin{equation}\label{22}
 ( T^{(l)}\mathbf f, \bar\partial^*\phi)_{\Omega^{(l_0)}} =\lim_{\epsilon \rightarrow 0}( T^{(l)}\mathbf f^\epsilon, \bar\partial^*\phi)_{\Omega^{(l_0)}}= \lim_{\epsilon \rightarrow 0}( \bar\partial T^{(l)}\mathbf f^\epsilon, \phi)_{\Omega^{(l_0)}} = \lim_{\epsilon \rightarrow 0} (\mathbf f^\epsilon, \phi)_{\Omega^{(l_0)}} = (\mathbf f, \phi)_{\Omega}.
\end{equation}

We further claim that \begin{equation}\label{11}
  ( T\mathbf f, \bar\partial^*\phi)_{\Omega}=\lim_{l\rightarrow \infty}( T^{(l)}\mathbf f, \bar\partial^*\phi)_{\Omega^{(l_0)}}.
\end{equation}
     To prove this,  for simplicity of notations yet without loss of generality, assume $\pi_j \mathbf f $ contains only one component function $f_j$, so is for $\phi$. We will also drop various integral measure, which is clear from context. For each $j\ge 1$,
\begin{equation*}
  \begin{split}
    &-(-2i)^n(2\pi i)^j( T_j^{(l)}S_1^{(l)}\cdots S_{j-1}^{(l)}\pi_j \mathbf f, \bar\partial^*\phi)_{\Omega^{(l_0)}} \\
     =&\int_{z\in K}\int_{\zeta_j\in D_j^{(l)}}\int_{\zeta_1\in\partial D_1^{(l)}}\cdots \int_{\zeta_{j-1}\in \partial D_{j-1}^{(l)}}\frac{f_j(\zeta_1, \cdots, \zeta_j, z_{j+1}, \cdots, z_n)\overline{\bar\partial^*\phi(z)}}{(\zeta_1-z_1)\cdots(\zeta_j-z_j)}\\
    = &\int_{(z, \zeta_j)\in K\times D_j}\left(\int_{\zeta_1\in\partial D_1^{(l)}}\cdots \int_{\zeta_{j-1}\in \partial D_{j-1}^{(l)}}\frac{f_j(\zeta_1, \cdots, \zeta_j, z_{j+1}, \cdots, z_n)\chi_{D_j^{(l)}}(\zeta_j)\overline{\bar\partial^*\phi(z)}}{(\zeta_1-z_1)\cdots(\zeta_j-z_j)}\right).
  \end{split}
\end{equation*}
Here $\chi_{D_j^{(l)}}$ is the step function on $\mathbb C$ such that  $\chi_{D_j^{(l)}}=1$ in $D_j^{(l)}$ and 0 otherwise.  

Firstly,  as a function of $(z, \xi_j)\in K\times D_j$,  $$\int_{\zeta_1\in\partial D_1^{(l)}}\cdots \int_{\zeta_{j-1}\in\partial D_{j-1}^{(l)}}\frac{f_j(\zeta_1, \cdots, \zeta_j, z_{j+1}, \cdots, z_n)\chi_{D_j^{(l)}}(\zeta_j)\overline{\bar\partial^*\phi(z)}}{(\zeta_1-z_1)\cdots(\zeta_j-z_j)}\in L^1(K\times D_j).$$ To see this, notice that if  $z\in K(\subset \Omega^{(l_0-2)})$ and $\zeta_k\in \partial D_k^{(l)}, l\ge l_0, k=1, \ldots, j-1$, then $$|\zeta_k-z_k|\ge dist((\Omega^{(l)})^c, \Omega^{(l_0-2)})\ge dist((\Omega^{(l_0)})^c, \Omega^{(l_0-2)})> \frac{1}{l_0^2}: =\delta_0.$$
Hence  for each $(z, \zeta_j)\in K\times D_j\setminus \{z_j=\zeta_j\}$,
 $$\left|\int_{\zeta_1\in \partial D_1^{(l)}}\cdots \int_{\zeta_{j-1}\in \partial D_{j-1}^{(l)}}\frac{f_j(\zeta_1, \cdots, \zeta_j, z_{j+1}, \cdots, z_n)\chi_{D_j^{(l)}}(\zeta_j)\overline{\bar\partial^*\phi(z)}}{(\zeta_1-z_1)\cdots(\zeta_j-z_j)}\right|\le \frac{C}{\delta_0^{j-1}|\zeta_j-z_j|}$$  for some constant $C>0$, which is integrable in $K\times D_j$. 
 
 On the other hand,  by continuity of $f_j$ and the construction of $\Omega^{(l)}$,
\begin{equation*}
  \begin{split}
    &\lim_{l\rightarrow \infty} \int_{\zeta_1\in\partial D_1^{(l)}}\cdots \int_{\zeta_{j-1}\in\partial D_{j-1}^{(l)}}\frac{f_j(\zeta_1, \cdots, \zeta_j, z_{j+1}, \cdots, z_n)\chi_{D_j^{(l)}}(\zeta_j)\overline{\bar\partial^*\phi(z)}}{(\zeta_1-z_1)\cdots(\zeta_j-z_j)}\\
    = &\int_{\zeta_1\in\partial D_1}\cdots \int_{\zeta_{j-1}\in\partial D_{j-1}}\frac{f_j(\zeta_1, \cdots, \zeta_j, z_{j+1}, \cdots, z_n)\overline{\bar\partial^*\phi(z)}}{(\zeta_1-z_1)\cdots(\zeta_j-z_j)}
  \end{split}
\end{equation*}
pointwisely in $K\times D_j$. Applying Dominated Convergence Theorem, we obtain
 \begin{equation*}
 \begin{split}
&\lim_{l\rightarrow \infty}-(-2i)^n(2\pi i)^j( T_j^{(l)}S_1^{(l)}\cdots S_{j-1}^{(l)}\pi_j \mathbf f, \bar\partial^*\phi)_{\Omega^{(l_0)}} \\
=& \int_{(z, \zeta_j)\in K\times D_j}\int_{\zeta_1\in\partial D_1}\cdots \int_{\zeta_{j-1}\in\partial D_{j-1}}\frac{f_j(\zeta_1, \cdots, \zeta_j, z_{j+1}, \cdots, z_n)\overline{\bar\partial^*\phi(z)}}{(\zeta_1-z_1)\cdots(\zeta_j-z_j)}  \\
=&-(-2i)^n(2\pi i)^j (T_jS_1\cdots S_{j-1}\pi_j \mathbf f, \bar\partial^*\phi)_{\Omega}.
\end{split}
\end{equation*}
 (\ref{11}) is thus proved for $T$ in view of  its definition (\ref{key}).

Finally, combining  (\ref{22}) with (\ref{11}),  we deduce that
\begin{equation*}
  (\bar\partial T\mathbf f, \phi)_\Omega  = ( T\mathbf f, \bar\partial^*\phi)_{\Omega}=\lim_{l\rightarrow \infty}( T^{(l)}\mathbf f, \bar\partial^*\phi)_{\Omega^{(l_0)}} =(\mathbf f, \phi)_\Omega.
\end{equation*}
The proof of Proposition \ref{main1} is complete.

\end{proof}
\medskip

\begin{proof}[Proof of Theorem \ref{main}] Theorem \ref{main} follows directly from Theorem \ref{ka},  Remark \ref{mm} a)  and Proposition \ref{main1}.

\end{proof}
\medskip

Finally, making use of the idea of Kerzman \cite{Kerzman}, we argue by the following  examples the regularity of the $\bar\partial$ solution  can not be improved in H\"older spaces over product domains.
\medskip

%\begin{example}
%Let $\mathbb D^2=\{(z_1, z_2)\in \mathbb C^2: |z_1|<1, |z_2|<1\}$ be the bidisc. Consider $\bar\partial \mathbf u =\mathbf f:= \bar\partial ((z_1-1)^{\frac{1}{2}}\bar z_2)$ on $\mathbb D^2$, $\frac{1}{2}\pi <\arg (z_1-1)<\frac{3}{2}\pi$. Then $\mathbf f$ is a $\bar\partial$-closed $(0,1)$ form with $C^{\frac{1}{2}}$ coefficients on $\mathbb D^2$. However, there does not exist a solution $u\in C^\gamma(\mathbb D^2)$ for any $\gamma$ with $1>\gamma>\frac{1}{2}$.
% \end{example}

\begin{example}\label{ex2}
a). For each $k\in \mathbb Z^+\cup \{0\}$ and $ 0<\alpha<1$, consider $\bar\partial u =\mathbf f:= \bar\partial ((z_1-1)^{k+\alpha}\bar z_2)$ on $\mathbb D^2$, $\frac{1}{2}\pi <\arg (z_1-1)<\frac{3}{2}\pi$. Then $\mathbf f= (z_1-1)^{k+\alpha}d\bar z_2\in C^{k,\alpha}(\mathbb D^2)$ is a $\bar\partial$-closed $(0, 1)$ form. However, there does not exist a solution $u\in C^{k, \alpha'}(\mathbb D^2)$ to $\bar\partial u =\mathbf f$ on $\mathbb D^2$ for any $\alpha'>\alpha$.
\medskip

b). For each $k\in \mathbb Z^+\cup \{0\}$, consider $\bar\partial u =\mathbf f:= \bar\partial (\frac{(z_1-1)^{k+1}}{\log(z_1-1)}\bar z_2)$ on $\mathbb D^2$, $\frac{1}{2}\pi <\arg (z_1-1)<\frac{3}{2}\pi$. Then $\mathbf f = \frac{(z_1-1)^{k+1}}{\log(z_1-1)}d\bar z_2\in C^{k,1}(\mathbb D^2)$ is a $\bar\partial$-closed $(0,1)$ form. However, there does not exist a solution $u\in C^{k+1, \alpha}(\mathbb D^2)$ to $\bar\partial u =\mathbf f$ on $\mathbb D^2$ for any $\alpha>0$. 
 \end{example}
\medskip

\begin{proof} a).   $\mathbf f$ is well defined in $\mathbb D^2$ and $\mathbf f = (z_1-1)^{k+\alpha} d\bar z_2\in C^{k, \alpha}(\mathbb D^2)$. Assume by contradiction that there exists a solution $u\in C^{k, \alpha'}(\mathbb D^2)$ to $\bar\partial u =\mathbf f$ in $\mathbb D^2$ for some $\alpha'$ with $\alpha <\alpha'<1$. Then $u = h +(z_1-1)^{k+\alpha}\bar z_2$ for some holomorphic function $h$ in $\mathbb D^2$.

  Consider $w(\xi): =\int_{|z_2|=\frac{1}{2}}u(\xi, z_2)dz_2$ for $\xi\in \mathbb D: =\{z\in \mathbb C: |z|<1\}$. Since $u\in C^{k, \alpha'}(\mathbb D^2)$, we have $w\in C^{k, \alpha'}(\mathbb D)$ as well. On the other hand, by Cauchy's Theorem,
  \begin{equation*}
    %\begin{split}
     w(\xi) =\int_{|z_2|=\frac{1}{2}} (\xi-1)^{k+\alpha}\bar z_2dz_2 = (\xi-1)^{k+\alpha}\int_{|z_2|=\frac{1}{2}} \frac{1}{4 z_2}dz_2 = \frac{\pi i}{2}(\xi-1)^{k+\alpha}.
   % \end{split}
  \end{equation*}
  This is a contradiction since $(\xi-1)^{k+\alpha}\notin C^{k, \alpha'}(\mathbb D)$ for any $\alpha'>\alpha$.
  \medskip
  
  b).   Argue in a similar way as in a) by noticing that  $\mathbf f = \frac{(z_1-1)^{k+1}}{\log (z_1-1) }d\bar z_2\in C^{k, 1}(\mathbb D^2)$. If $u\in C^{k+1, \alpha}(\mathbb D^2)$ solves $\bar\partial u =\mathbf f$ in $\mathbb D^2$ for some $\alpha>0$, then $u = h +\frac{(z_1-1)^{k+1}}{\log (z_1-1)}\bar z_2$ for some holomorphic function $h$ in $\mathbb D^2$ and  
  $w(\xi): =\int_{|z_2|=\frac{1}{2}}u(\xi, z_2)dz_2\in C^{k+1, \alpha}(\mathbb D^2)$. However by Cauchy's Theorem,
  \begin{equation*}
        w(\xi) =\int_{|z_2|=\frac{1}{2}} \frac{(\xi-1)^{k+1}}{\log (\xi-1)}\bar z_2dz_2 =\frac{\pi i}{2}\frac{(\xi-1)^{k+1}}{\log (\xi-1)}\notin C^{k+1, \alpha}(\mathbb D) 
    \end{equation*}
 for any $\alpha>0$.
 
  \end{proof}

\appendix
\section{Appendix}
Let $\Omega\subset\mathbb R^n$ be a bounded domain,  $\Omega_j: = \{x\in \Omega: dist(x, \partial \Omega)>\frac{1}{j}\}$ when $j$ is large, and $\rho$ be a smooth function  in $\mathbb R^n$ by $$
 \rho(x):  =
  \left\{
      \begin{array}{cc}
     C\exp(\frac{1}{|x|^2-1}), & |x|<1;\\
      0, & |x|\ge 1,
         \end{array}
\right.$$
where $C$ is selected such that $\int_{\mathbb R^n}\rho(y) dy =1$. $\rho$ is called the standard mollifier.  Let   $f\in L^1_{loc}(\Omega)$ and define for $x\in \Omega_j$,
\begin{equation}\label{ap}
f_j(x): =\int_{|y|\le 1}\rho(y)f(x-\frac{y}{j})dy. \end{equation} Then $f_j\in C^\infty(\Omega_j)$. The mollifier argument is a standard method dealing with weak derivatives in Sobolev spaces (See, for instance, \cite{Evans} p. 717). The following theorem ought to be well-known for H\"older spaces, however we could not locate a reference.  For convenience of the reader, we include the proof below.

\begin{thm}
Let $\tilde \Omega\subset\subset \Omega$ and  $0<\alpha'<\alpha$. If $f\in C^{\alpha}(\Omega)$, then $f_j\rightarrow f$ in $C^{\alpha'}(\tilde \Omega)$. I.e., $\|f_j-f\|_{C^{\alpha'}(\tilde \Omega)}\rightarrow 0$ as $j\rightarrow \infty$.
\end{thm}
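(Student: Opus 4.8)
The plan is to decompose the target norm as $\|f_j-f\|_{C^{\alpha'}(\tilde\Omega)}=\|f_j-f\|_{C^0(\tilde\Omega)}+H^{\alpha'}[f_j-f]$ and to handle the two pieces by a single mechanism: mollification converges uniformly while \emph{preserving} the $C^\alpha$ seminorm, and one then interpolates these two facts to control the weaker $C^{\alpha'}$ seminorm. First I would record that, since $\tilde\Omega\subset\subset\Omega$, for all large $j$ one has $\tilde\Omega\subset\Omega_j$ and, for $x\in\tilde\Omega$ and $|y|\le 1$, the point $x-\frac{y}{j}$ stays in $\Omega$; hence $f_j$ is well defined on $\tilde\Omega$. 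Set $g_j:=f_j-f$ and let $H^\alpha[f]$ denote the seminorm over $\Omega$.

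For the uniform estimate I would write $g_j(x)=\int_{|y|\le 1}\rho(y)\bigl(f(x-\tfrac{y}{j})-f(x)\bigr)\,dy$ and use $|f(x-\tfrac{y}{j})-f(x)|\le H^\alpha[f]\,|\tfrac{y}{j}|^\alpha\le H^\alpha[f]\,j^{-\alpha}$ together with $\int\rho=1$ to obtain $\|g_j\|_{C^0(\tilde\Omega)}\le H^\alpha[f]\,j^{-\alpha}\to 0$.

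For the seminorm, the key observation is that mollification does not increase the H\"older seminorm: from $f_j(x)-f_j(x')=\int\rho(y)\bigl(f(x-\tfrac{y}{j})-f(x'-\tfrac{y}{j})\bigr)\,dy$ and the $C^\alpha$ bound on $f$, one gets $H^\alpha[f_j]\le H^\alpha[f]$, so that $H^\alpha[g_j]\le 2H^\alpha[f]$ uniformly in $j$. I would then interpolate between this uniform $C^\alpha$ bound and the $C^0$ smallness by a two-scale splitting. For $x\ne x'$ in $\tilde\Omega$ and a scale $\delta>0$: when $|x-x'|\le\delta$, bound $\frac{|g_j(x)-g_j(x')|}{|x-x'|^{\alpha'}}=\frac{|g_j(x)-g_j(x')|}{|x-x'|^{\alpha}}\,|x-x'|^{\alpha-\alpha'}\le 2H^\alpha[f]\,\delta^{\alpha-\alpha'}$; when $|x-x'|>\delta$, bound it by $2\|g_j\|_{C^0(\tilde\Omega)}\,\delta^{-\alpha'}$. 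Given $\varepsilon>0$, I would first choose $\delta$ small so that $2H^\alpha[f]\,\delta^{\alpha-\alpha'}<\varepsilon$ (legitimate precisely because $\alpha'<\alpha$), and only \emph{then}, with $\delta$ frozen, choose $j$ large so that $2\|g_j\|_{C^0(\tilde\Omega)}\,\delta^{-\alpha'}<\varepsilon$, giving $H^{\alpha'}[g_j]<\varepsilon$.

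The only genuinely delicate point is the order of quantifiers in this splitting: $\delta$ must be fixed before letting $j\to\infty$, and what makes this legitimate is the \emph{uniform} bound $H^\alpha[g_j]\le 2H^\alpha[f]$ — it is essential that the $C^\alpha$ seminorm of $g_j$ stays bounded rather than being required to vanish. Combining the uniform convergence of the first paragraph with the seminorm convergence just obtained yields $\|f_j-f\|_{C^{\alpha'}(\tilde\Omega)}\to 0$, as claimed.
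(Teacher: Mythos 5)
Your proof is correct and follows essentially the same route as the paper's: both rest on the same two-scale splitting of the difference quotient at a threshold $\delta$ chosen from $\varepsilon$ and $\alpha-\alpha'$ \emph{before} sending $j\to\infty$, with the case $|x-x'|\le\delta$ controlled by the uniform $C^\alpha$ bound and the case $|x-x'|>\delta$ by the $O(j^{-\alpha})$ uniform smallness. Your packaging (first recording $H^\alpha[f_j-f]\le 2H^\alpha[f]$ and $\|f_j-f\|_{C^0}\le H^\alpha[f]j^{-\alpha}$, then interpolating) is a slightly more modular presentation of the identical estimates.
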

\begin{proof}
Let $j_0$ be such that $\tilde \Omega\subset \Omega_{j_0}$ and assume $ j\ge j_0$.  $\|f_j-f\|_{C(\tilde \Omega)}\rightarrow 0$  due to the uniform continuity of $f$ on $\Omega$ (\cite{Evans} p.718). Write $\phi_j(x): = f_j(x)-f(x) = \int_{|y|\le 1}\rho(y)(f(x-\frac{y}{j})-f(x))dy$. We next show for any $\epsilon>0$, there exists $N\in \mathbb N$ such that when $j\ge N$,
\begin{equation*}
    \frac{|\phi_j(x)-\phi_j(x')|}{|x-x'|^{\alpha'}}\le \epsilon,
\end{equation*}
for all $x, x'\in \tilde \Omega$. Indeed, choose $\delta_0>0$ satisfing $\|f\|_{C^\alpha(\Omega)}\delta_0^{\alpha-\alpha'}\le \frac{\epsilon}{2}$.

When $|x-x'|\le \delta_0$,
\begin{equation*}
\begin{split}
    \frac{|\phi_j(x)-\phi_j(x')|}{|x-x'|^{\alpha'}}&\le \int_{|y|\le 1}\rho(y)\frac{|f(x-\frac{y}{j})-f(x'-\frac{y}{j})|}{|x-x'|^{\alpha'}}dy +  \int_{|y|\le 1}\rho(y)\frac{|f(x)-f(x')|}{|x-x'|^{\alpha'}}dy\\
    &\le 2\|f\|_{C^\alpha(\Omega)}|x-x'|^{\alpha-\alpha'}\le \epsilon.
    \end{split}
    \end{equation*}

    When $|x-x'|> \delta_0$, choose $N\in\mathbb N$ such that $\| f\|_{C^\alpha(\Omega)}\delta_0^{-{\alpha'}}N^{-\alpha}\le \frac{\epsilon}{2}$. Then for any $j\ge N$, $|x-x'|> \delta_0$, we have
    \begin{equation*}
\begin{split}
    \frac{|\phi_j(x)-\phi_j(x')|}{|x-x'|^{\alpha'}}&\le \int_{|y|\le 1}\rho(y)\frac{|f(x-\frac{y}{j})-f(x)|}{|x-x'|^{\alpha'}}dy +  \int_{|y|\le 1}\rho(y)\frac{|f(x'-\frac{y}{j})-f(x')|}{|x-x'|^{\alpha'}}dy\\
    &\le 2\|f\|_{C^\alpha(\Omega)} |x-x'|^{-\alpha'}j^{-\alpha}\le \epsilon.
    \end{split}
    \end{equation*}
\end{proof}
 Given $f\in C^\alpha(\Omega)$, although only the  $C^{\alpha'}$ convergence of the family $\{f_j\}$ defined by (\ref{ap}) for some $\alpha'>0$ is needed  in Proposition \ref{main1},  we note  that the $C^{\alpha}$ convergence of $\{f_j\}$ can not be achieved in general. The following simple counter-example was provided by Liding Yao.

\begin{example}
Let $\Omega=(-1, 1)\in \mathbb R$ and $$
 f(x) =
  \left\{
      \begin{array}{cc}
      0, &  x\le 0;\\
     x^\alpha , & x>0.
         \end{array}
\right.$$
Then $f\in C^\alpha(\Omega)$. However, for any $\tilde \Omega\subset\subset \Omega$ containing the origin, $\|f_j-f\|_{C^\alpha(\tilde \Omega)}\ge \int_{0}^1\rho(y)y^\alpha dy>0$ for sufficiently large $j$.
\end{example}

\begin{proof}
Let $j_0$ be such that $\tilde \Omega\subset \Omega_{j_0}$ and assume $ j\ge j_0$. Write $\phi_j(x): = f_j(x)-f(x) = \int_{-1}^1\rho(y)(f(x-\frac{y}{j})-f(x))dy$. For each fixed $j$, it can be verified that  $$\phi_j(-\frac{1}{j}) =\int_{-1}^1\rho(y)f(-\frac{1+y}{j})dy = 0$$ and $$\phi_j(0) =\int_{-1}^1\rho(y)f(-\frac{y}{j})dy =  (\frac{1}{j})^\alpha\int_{0}^1\rho(y)y^\alpha dy.$$
However  for all $j$, $$\|\phi_j\|_{C^\alpha(\tilde \Omega)}\ge  \frac{\phi_j(0) -\phi_j(-\frac{1}{j})}{(\frac{1}{j})^\alpha} = \int_{0}^1\rho(y)y^\alpha dy>0.$$
\end{proof}

 \noindent Yifei Pan, pan@pfw.edu, Department of Mathematical Sciences, Purdue University Fort Wayne,
Fort Wayne, IN 46805-1499, USA \medskip

\noindent Yuan Zhang, zhangyu@pfw.edu, Department of Mathematical Sciences, Purdue University Fort Wayne,
Fort Wayne, IN 46805-1499, USA

\end{document}